\title{How to calculate the proportion of everywhere locally soluble diagonal hypersurfaces}
\author{Yoshinori Kanamura}
\address{Y. Kanamura \\ Department of Mathematics \\ Faculty of Science and Technology, Keio University, 3-14-1, Hiyoshi, Kohoku, Yokohama, Kanagawa, Japan}
\email{kana1118yoshi@keio.jp}
\author{Yoshinosuke Hirakawa} 
\address{Y. Hirakawa \\ Department of Mathematics \\ Faculty of Science and Technology \\ Keio University, 3-14-1, Hiyoshi, Kohoku, Yokohama, Kanagawa, Japan}
\email{hirakawa@keio.jp}
	\thanks{This research was supported by the Research Grant of Keio Leading-edge Laboratory of Science \& Technology (Grant Number  2019-2020 000074).
	This research was supported in part by KAKENHI 18H05233.
	This research was conducted as part of the KiPAS program FY2014--2018 of the Faculty of Science and Technology at Keio University.}
\subjclass[2010]{primary 	14G05;  
	secondary
		11D09; 
		11D25; 
		11D41; 
		11D72; 
		11E76; 
		}
\keywords{Diophantine equations,
	Hasse principle,
	rational points}
\date{\today}
\theoremstyle{plain}
 \newtheorem{theorem}{Theorem}[section]
 \crefname{theorem}{Theorem}{Theorems}
 \newtheorem{proposition}[theorem]{Proposition}
 \crefname{proposition}{Proposition}{Propositions}
 \newtheorem{lemma}[theorem]{Lemma}
 \crefname{lemma}{Lemma}{Lemmas}
 \crefname{corollary}{Corollary}{Corollaries}
 \crefname{conjecture}{Conjecture}{Conjectures}
 \crefname{question}{Question}{Questions}
 \crefname{problem}{Problem}{Problems}
\theoremstyle{definition} 
 \newtheorem{definition}[theorem]{Definition}
 \crefname{definition}{Definition}{Definitions}
 \crefname{example}{Example}{Examples}
 \newtheorem{remark}[theorem]{Remark}
 \crefname{remark}{Remark}{Remarks}
 \crefname{caution}{Caution}{Cautions}
 \crefname{equation}{formula}{formulas}
\newcommand{\pintp}{\Z_p}
\newcommand{\pintt}[1]{\Z_{#1}^{\times}}
\newcommand{\Proj}[1]{\mathbb{P}^{#1}}
\newcommand{\abs}[1]{\lvert#1\rvert}
\newcommand{\setmid}{\mathrel{}\middle|\mathrel{}} 
\newcommand{\A}{\mathbb{A}}
\newcommand{\F}{\mathbb{F}}
\newcommand{\PP}{\mathbb{P}}
\newcommand{\Q}{\mathbb{Q}}
\newcommand{\R}{\mathbb{R}}
\newcommand{\Z}{\mathbb{Z}}
\newcommand{\loc}{\mathrm{loc}}
\newcommand{\bb}[1]{\boldsymbol{#1}}
\newcommand{\ba}{\boldsymbol{a}}
\DeclareMathOperator{\image}{Im} 
\DeclareMathOperator{\wt}{\boldsymbol{w}} 
\begin{document}


\maketitle

\begin{abstract}
In this paper,
we establish a strategy for the calculation of the proportion of everywhere locally soluble diagonal hypersurfaces of $\mathbb{P}^{n}$ of fixed degree.
Our strategy is based on the product formula established by Bright, Browning and Loughran. Their formula reduces the problem into the calculation of the proportions of $\mathbb{Q}_{v}$-soluble diagonal hypersurfaces for all places $v$.
As worked examples,
we carry out our strategy in the cases of quadratic and cubic hypersurfaces.
As a consequence, 
we prove that around $99.99\%$ of diagonal cubic $4$-folds have $\mathbb{Q}$-rational points under a hypothesis on the Brauer-Manin obstruction. 
\end{abstract}

\section{Introduction}

In arithmetic geometry, 
for a given family of algebraic varieties,
it is natural to ask how often its member has a rational point.
Poonen and Voloch \cite{Poonen-Voloch} gave a philosophy that,
for a nice family, the proportion of members which have $\Q$-rational points can be represented by the product of the proportions of those which have $\Q_{v}$-rational points for every place $v$.
Following their philosophy,
the proportions have been calculated for some special families in e.g.\ \cite{BBL, BCFJK, Browning, Browning_cubic} under conditions on the Brauer-Manin obstruction. 
There are also other related works e.g.\ \cite{Bhargava, BGW, Mitankin-Salgado, Poonen-Stoll}.

Among algebraic varieties,
diagonal hypersurfaces of $\PP^{n}$ have attracted special interest because of their remarkable arithmetic properties. 
For example,
although cubic hypersurfaces of $\PP^{6}$ may not have $\Q_{v}$-rational points in general,
Lewis \cite{Lewis_cubic} proved that diagonal cubic hypersurfaces of $\PP^{6}$ have $\Q_{v}$-rational points for every place $v$.
Moreover,
although cubic hypersurfaces of $\PP^{8}$ may not have $\Q$-rational points in general,
Hooley \cite{Hooley_I} proved that diagonal cubic hypersurfaces of $\PP^{8}$  have $\Q$-rational points by establishing the Hasse principle for non-singular cubic hypersurfaces of $\PP^{8}$ (see also \cite{Hooley_RH}).

In this paper,
we study the proportion of diagonal hypersurfaces of $\PP^{n}$ which have $\Q$-rational points by following the above philosophy of Poonen and Voloch. 
Before our main results,
let us explain some simple applications.
Fix $n, k \in \Z_{\geq 2}$. For each $\bb{a} = (a_{0}, \dots, a_{n}) \in \Z^{\oplus n+1}$, let $X_{\bb{a}}^{k}$ be a diagonal hypersurfaces of $\PP^{n}$ defined by $\sum_{i = 0}^{n} a_{i}x_{i}^{k} = 0$, and set $\abs{\bb{a}}$ as 
$\max\{ \abs{a_{0}}, \abs{a_{1}},\cdots, \abs{a_{n}} \}$
with the Euclidean norm $\abs{\cdot}$ on $\R$. 
Moreover, set 
\[
\rho(n, k)
:= \lim_{H \to \infty} \frac{\left\{ \bb{a} \in \Z^{\oplus n+1} \setmid
	\text{$\lvert \bb{a} \rvert < H$ and $X_{\bb{a}}^{k}(\Q) \neq \emptyset$} \right\}}
{\left\{ \bb{a} \in \Z^{\oplus n+1} \setmid \lvert \bb{a} \rvert < H \right\}}
\]
if the limit exists.

\begin{theorem}\label{applicaiton}
\begin{enumerate}
	\item 
	For $k = 2$, we have the following table.
	\begin{table}[H]
		\begin{tabular}{|c|c|c|c|c|c|} \hline
			$n$ & $2$ & $3$ & $\geq 4$ \\ \hline
			$\rho(n, 2)$ & $0$ & $0.8268\ldots$ & $1-2^{-n}$ \\ \hline
		\end{tabular}
	\end{table}
		
	\item 
	For $k = 3$, we have the following table under the assumption that if $3 \leq n \leq 7$, then the Brauer-Manin obstruction is the only obstruction to the Hasse principle for diagonal cubic $(n-1)$-folds.
	\begin{table}[H]
		\begin{tabular}{|c|c|c|c|c|c|} \hline
			$n$ & $2$ & $3$ & $4$ & $5$ & $\geq 6$ \\ \hline
			$\rho(n, 3)$ & $0$ & $0.8964\ldots$ & $0.9965\ldots$ & $0.9999\ldots$ & $1$ \\ \hline
		\end{tabular}
	\end{table}
\end{enumerate}
\end{theorem}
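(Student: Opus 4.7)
The plan is to apply the product formula of Bright, Browning and Loughran (BBL), which, under the Brauer--Manin hypothesis in the stated range, expresses
\[
\rho(n,k)\;=\;\prod_{v}\sigma_{v}(n,k),
\]
with $v$ running over the places of $\Q$ and $\sigma_{v}(n,k)$ denoting the natural proportion of $\bb a\in\Z_{v}^{\oplus n+1}$ for which $X_{\bb a}^{k}$ has a $\Q_{v}$-rational point. The theorem then reduces to computing these local factors and multiplying them together.

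First, handle the archimedean place: $X_{\bb a}^{2}(\R)\ne\emptyset$ iff the $a_{i}$ are not all of the same sign, while odd-degree forms are always real-soluble, giving $\sigma_{\infty}(n,2)=1-2^{-n}$ and $\sigma_{\infty}(n,3)=1$. Next, identify the finite primes at which $\sigma_{p}=1$: for any $p\nmid k$ with enough variables, Chevalley--Warning together with Hensel lifting produces a smooth $\Q_{p}$-point for every coefficient vector; combined with Meyer's theorem (for $k=2$, $n\ge 4$) and Lewis's theorem on diagonal cubics in $\PP^{6}$ (for $k=3$, $n\ge 6$), this already yields $\sigma_{p}(n,k)=1$ at every finite $p$ in the ``large $n$'' rows, so $\rho(n,2)=1-2^{-n}$ for $n\ge 4$ and $\rho(n,3)=1$ for $n\ge 6$. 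For $n=2$ the global product is a convergent product of factors strictly less than $1$ at infinitely many primes, forcing $\rho(2,k)=0$; in the conic case this is the classical density-zero result, and in the diagonal plane cubic case it already follows from the local factors arising in the product formula.

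The substantive work lies in the intermediate rows $n=3$ for $k=2$ and $n=3,4,5$ for $k=3$, where one must compute non-trivial $p$-adic factors: at $p=2$ in the quadric case and at $p=3$ (together with the finitely many primes carrying a non-trivial $3$-torsion Brauer class) in the cubic case. In each such case I would stratify $\Z_{p}^{\oplus n+1}$ by the $p$-adic valuations and the residues modulo a suitable $p^{r}$ of the $a_{i}$, decide solubility of the form on each stratum by explicit Hensel lifting from a smooth residue-field point, and sum the resulting Haar volumes as an absolutely convergent geometric series.

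The main obstacle will be this $p$-adic stratification for $\sigma_{3}(n,3)$: diagonal cubic solubility over $\Q_{3}$ is governed by the $a_{i}$ modulo $27$ and by the cubic residue symbol, so the number of relevant strata grows quickly with $n$, and extracting a closed form requires care. Moreover, in the $n=3$ case one must additionally convert the ``everywhere locally soluble'' proportion into the ``has a $\Q$-point'' proportion via an explicit analysis of the Brauer--Manin obstruction on diagonal cubic surfaces. Once these strata are enumerated, the decimals in the tables---including $\rho(5,3)=0.9999\ldots$ underlying the $99.99\%$ claim in the abstract---follow simply by multiplying the finitely many non-trivial local factors.
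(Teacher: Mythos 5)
Your overall architecture (product formula under the Brauer--Manin hypothesis, archimedean factor $1-2^{-n}$ resp.\ $1$, trivial local factors in the large-$n$ rows via Lewis and the $u$-invariant of $\Q_p$, divergence of the product to $0$ for $n=2$, and a tour-de-force $p$-adic stratification at the bad primes $p=2$, $p=3$) matches the paper's strategy. But there is a genuine gap in where you locate the non-trivial local factors, and it is fatal for the numerical rows. You claim the only non-trivial factors in the intermediate rows are at $p=2$ (quadric case) and at $p=3$ together with finitely many primes ``carrying a non-trivial $3$-torsion Brauer class'' (cubic case), and that the decimals follow ``by multiplying the finitely many non-trivial local factors.'' This is false: for $n=3$, $k=2$ \emph{every} prime has $\rho_p(3,2)<1$ (anisotropic diagonal quaternary forms exist over every $\Q_p$; the paper computes $\rho_p(3,2)=1-\tfrac32 p^{-2}\bigl(\tfrac{1-p^{-1}}{1-p^{-2}}\bigr)^{4}$ for odd $p$), and for $k=3$, $n=3,4,5$ every prime $p\equiv 1\bmod 3$ has $\rho_p(n,3)<1$ (insoluble forms built from cubic non-residues, e.g.\ $(u,-ut,pu',-pu't',\dots)$, have positive density). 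Hence the theorem requires (i) a uniform closed-form evaluation of $\rho_p(n,k)$ at all generic primes --- the paper gets this from a classification of coefficient vectors into soluble/insoluble types using Hensel plus the Hasse--Weil bound (its Lemma 2.2 and Proposition 2.3) --- and (ii) a rigorous numerical estimate of the resulting \emph{infinite} Euler-type product, which the paper carries out by sandwiching it between a truncation at $p<10^{6}$ and the same truncation multiplied by a tail bound expressed through $\zeta(2)$. Your plan contains neither ingredient, so it cannot produce $0.8268\ldots$, $0.8964\ldots$, $0.9965\ldots$, $0.9999\ldots$; also the ``$3$-torsion Brauer class'' framing conflates the local-density computation with the Brauer--Manin input, which in the paper enters only through Browning's theorem equating $\rho$ with $\rho_{\loc}$ under the stated hypothesis (no explicit obstruction analysis on cubic surfaces is redone there).

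Two smaller points: your appeal to Chevalley--Warning ``for every coefficient vector'' needs the normalization of valuations modulo $k$ plus a pigeonhole step (some valuation level must contain enough unit-coefficient variables), or simply a citation of the known results the paper uses; and Meyer's theorem is a global statement --- the relevant local fact for $k=2$, $n\ge 4$ is that every quadratic form in at least five variables over $\Q_p$ is isotropic. Likewise, for $n=2$ the product does not ``converge'' to $0$; it diverges to $0$ because the factors are $1-cp^{-1}$ over the infinitely many $p\equiv 1\bmod k$, and since no Brauer--Manin hypothesis is assumed there one should argue via $\rho\le\rho_{\loc}=\prod_v\rho_v$.
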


For the proof of \cref{applicaiton},
see \S3.3.
Note that there is no known cubic $(n-1)$-fold ($n \geq 4$) which violates the Hasse principle
(cf. \cite[p.49]{Colliot-Thelene-Swinnerton-Dyer}, \cite[Conjecture 3.2 and Appendix A]{Poonen-Voloch}).
For the detail on the Brauer-Manin obstruction,
see e.g.\ \cite{Manin_2nd,Poonen}.

In fact, some values in the above tables have been already known in the literature 
(e.g. \cite{BBL,Serre_course,Browning-Dietmann,Lewis_cubic}),
but the authors could not find any explicit references which contain the values $\rho(3,2)$, $\rho(4,3)$, $\rho(5,3)$.

Thanks to \cite[Theorem 1.4]{Browning},
under the condition in \cref{applicaiton}, the value $\rho(n, k)$ coincides with its local avatar $\rho_{\loc}(n, k)$, which is defined by
\[
\rho_{\loc}(n, k) := \lim_{H \to \infty}
\frac{\left\{ \bb{a} \in \mathbb{Z}^{\oplus n+1} \setmid
	\text{$\abs{\bb{a}} < H$ and $X_{\bb{a}}^{k}(\Q_{v}) \neq \emptyset$ for every place $v$}  \right\}}{\#\left\{ \bb{a} \in \mathbb{Z}^{\oplus n+1} \setmid \abs{\bb{a}} < H \right\}} 
\]
if the limit exists.
For a sufficient condition so that $\rho(n, k) = \rho_{\loc}(n, k)$ in the case $k \geq 4$ due to \cite{Brudem-Dietmann}, 
see \cref{the case k>3}. 
Moreover, set 
\begin{align*}
\rho_{v}(n, k) :=\begin{cases}
\mu_{p}\left( \left\{ \bb{a} \in \pintp^{\oplus n+1}
\setmid \text{$X_{\bb{a}}^{k}(\Q_p) \neq \emptyset$} \right\} \right) &\text{if $v$ is a prime $p$},\\
2^{-n-1}\mu_{\infty}\left(\left\{\bb{a}\in [-1, 1]^{n+1} \setmid X_{\bb{a}}^{k}(\R)\neq \emptyset\right\}\right) &\text{if $v = \infty$}.
\end{cases}
\end{align*}
Here, $\mu_{p}$ is the Haar measure on $\Z_{p}$ normalized so that $\mu_{p}(\Z_{p}) = 1$ and $\mu_{\infty}$ is the Lebesgue measure on $\R$.
We use the same letter $\mu_{p}$ (resp.\ $\mu_{\infty}$) also for the product measure on $\Z_{p}^{\oplus n+1}$ (resp.\ $\R^{\oplus n+1}$).
The calculation of $\rho_{\loc}(n, k)$ is reduced to that of $\rho_{v}(n, k)$ for every place $v$ by the following theorem of Bright, Browning and Loughran.

\begin{theorem}[{a special case of \cite[Theorem 1.3]{BBL}}]\label{thm: localprop} 
	The limit $\rho_{\loc}(n, k)$ exists
	\footnote{
	Note that $\rho_{\loc}(2, k) = 0$ for every $k \geq 2$
	as proven in \cite[Theorem 1.1]{Browning-Dietmann}.
	On the other hand, \cref{generic} implies that
	$\prod_{v : \text{place}} \rho_{v}(2, k) = 0$
	in the sense that
	$\sum_{v < H} \log\rho_{v}(2, k) \to -\infty$ ($H \to \infty$)
	because $\sum_{p < H, p \equiv 1 \bmod{k}} p^{-1} \to \infty$ ($H \to \infty$)
	(cf. \cite[p75]{Serre_course}).   
	}
	and is given by
	\begin{align*}
	\rho_{\loc}(n, k) = \prod_{v : \text{place}} \rho_{v}(n, k).
	\end{align*}
	Moreover,
	$\rho_{loc}(n, k)$ is positive whenever $n \geq 3$.
\end{theorem}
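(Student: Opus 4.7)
The plan is to specialize the general product formula of Bright, Browning and Loughran to the family of diagonal hypersurfaces. The universal family $\mathcal{X}\to \A^{n+1}$ defined by $\sum_{i=0}^{n}a_{i}x_{i}^{k}=0$ has smooth, geometrically integral generic fibre (namely a smooth diagonal hypersurface in $\PP^{n}$), so the hypotheses of \cite[Theorem 1.3]{BBL} are readily verified in this setting. The cited theorem then delivers, in one stroke, the existence of the limit $\rho_{\loc}(n,k)$ and its expression as the product $\prod_{v}\rho_{v}(n,k)$, where the product is interpreted in the sense made precise by loc.\ cit.

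For the positivity claim in the case $n\geq 3$, it suffices to show (i) each local factor $\rho_{v}(n,k)$ is positive, and (ii) the infinite product converges to a positive value. The factor at $v=\infty$ is positive since any $\bb{a}\in(-1,1)^{n+1}$ whose coefficients are not all of the same sign yields $X_{\bb{a}}^{k}(\R)\neq\emptyset$, and such tuples form a set of positive Lebesgue measure. For a prime $p$, one exhibits a concrete $\bb{a}\in\pintp^{\oplus n+1}$ whose reduction modulo $p$ has a smooth $\F_{p}$-point; a $p$-adic neighbourhood of such a tuple then has positive Haar measure and consists of $\Q_{p}$-soluble members by Hensel's lemma.

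Step (ii) reduces to the summability estimate $\sum_{p}(1-\rho_{p}(n,k))<\infty$. For this I would use a Hensel--Deligne--Weil argument: if $p\nmid k$ and all $a_{i}$ are $p$-adic units, the reduction $\overline{X}_{\bb{a}}^{k}$ is smooth, and by the Deligne--Weil bound
\[
\#\overline{X}_{\bb{a}}^{k}(\F_{p})=p^{n-1}+O_{n,k}\!\left(p^{(n-1)/2}\right),
\]
which is strictly positive once $p$ exceeds an explicit constant depending on $n$ and $k$; any smooth $\F_{p}$-point then lifts to a $\Q_{p}$-point. Thus the only contribution to $1-\rho_{p}(n,k)$ comes from $\bb{a}$ with at least one coefficient in $p\pintp$; stratifying this set by the multiset of $p$-adic valuations of the $a_{i}$ and reapplying the argument (with appropriate rescaling) to each stratum should yield $1-\rho_{p}(n,k)=O_{n,k}(p^{-2})$ when $n\geq 3$, which is summable.

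The main obstacle I expect is obtaining the $O(p^{-2})$ savings uniformly in $p$: the recursion on $p$-adic valuations produces several substrata according to how many coefficients vanish modulo $p$, and one must verify that for $n\geq 3$ each of these contributes only at the claimed order (rather than at order $p^{-1}$, which would be the case for $n=2$ and would reproduce the divergence noted in the footnote). The finitely many primes $p\mid k$, together with the small primes for which the Deligne--Weil estimate is not yet decisive, will be absorbed into the implicit constant.
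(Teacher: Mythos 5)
Your route is the same as the paper's at the crucial point: \cref{thm: localprop} is not proved from scratch but imported from \cite[Theorem 1.3]{BBL}, and the real content of any write-up is therefore the verification that the diagonal family satisfies the hypotheses of that theorem. This is where your proposal has a genuine gap. The hypotheses you check (smooth, geometrically integral \emph{generic} fibre) are not the operative ones: the Bright--Browning--Loughran theorem requires control of the fibres over the \emph{codimension-one points of the coefficient space} (they must contain a suitable multiplicity-one, geometrically integral component), since it is exactly this condition that forces $\sum_p\bigl(1-\rho_p(n,k)\bigr)$ to converge and makes the infinite product meaningful. For the diagonal family the only divisors carrying non-smooth fibres are the coordinate hyperplanes $\{a_i=0\}$, over which the fibre is a cone over a smooth diagonal hypersurface in $\PP^{n-1}$: this is geometrically integral precisely when $n\geq 3$, whereas for $n=2$ it is a union of $k$ conjugate lines and the condition fails. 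Your verification, as written, would apply verbatim to $n=2$, where the convergent product formula is false: $\rho_{\loc}(2,k)=0$ by \cite{Browning-Dietmann}, while $\sum_p\bigl(1-\rho_p(2,k)\bigr)$ diverges (this is the content of the footnote to the statement, using \cref{generic}). So you must (a) check the split-fibre condition, which is where $n\geq 3$ actually enters, and (b) handle $n=2$ separately, as the footnote does, if you want the statement in full generality; a smaller omission is the routine but non-trivial translation between counting coefficient vectors $\bb{a}\in\Z^{\oplus n+1}$ in boxes with the local Haar measures on $\Z_p^{\oplus n+1}$ and BBL's setting of points of $\PP^{n}(\Q)$ ordered by height with projective local densities.

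Your positivity sketch for $n\geq 3$ is sound in outline (each local factor positive plus $1-\rho_p(n,k)=O(p^{-2})$, which is consistent with the exact formula in \cref{generic}), and note that in the paper this positivity is part of what is imported from \cite{BBL}, so your argument is additional rather than a different decomposition of the main claim. However, one step fails as stated: for primes $p\mid k$ you cannot exhibit a smooth $\F_p$-point of the reduction and lift by Hensel, because the partial derivatives $ka_ix_i^{k-1}$ all vanish modulo $p$, so \emph{every} $\F_p$-point of $X_{\bb{a}}^{k}\bmod p$ is singular (cf.\ \cref{pathologicalprime}). For such $p$ one should instead use, for example, the open set of $\bb{a}$ with $-a_1/a_0\in\Z_p^{\times k}$, which has positive $\mu_p$-measure and visibly yields a $\Q_p$-point; with that repair, and with the Lang--Weil/Hensel stratification made quantitative as in \cref{generic}, your convergence argument goes through for $n\geq 3$.
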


In this paper, we establish a strategy to calculate $\rho_{v}(n, k)$ for all $v$ for each fixed $n$ and $k$ (see \cref{pathologicalprime}).
Our strategy is regarded as a quantitative refinement of the argument in \cite[\S3]{Browning-Dietmann}.
As worked examples,
we carry out our strategy in the cases $k =2, 3$. 
In particular, 
\cref{applicaiton} follows immediately from  the following \cref{quadratic_main,cubic_main}.

\begin{theorem} [$k = 2$] \label{quadratic_main}
Suppose that $k = 2$. 
	\begin{enumerate}
		\item
		If $n = 2$, then
		\[
		\rho_{p}(2, 2)
		= \begin{cases}
		\displaystyle \frac{7}{12} &\text{if $p = 2$}, \\
		\displaystyle 1 - \frac{3}{2}p^{-1} \left( \frac{1-p^{-1}}{1-p^{-2}} \right)^{2} & \text{otherwise}.
		\end{cases}
		\]

		\item
		If $n = 3$, then
		\[
		\rho_{p}(3, 2)
		= \begin{cases}
		\displaystyle \frac{1231}{1296} &\text{if $p = 2$}, \\
		\displaystyle 1 - \frac{3}{2}p^{-2} \left( \frac{1-p^{-1}}{1-p^{-2}} \right)^{4} & \text{otherwise}.
		\end{cases}
		\]
		
		\item $($cf.\ \cite[Corollary 2]{Serre_course}$)$
		If $n \geq 4$,
		then $\rho_{p}(n, 2) = 1$.
	\end{enumerate}
\end{theorem}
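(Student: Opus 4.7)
Part (3) is essentially immediate: if some $a_i = 0$ then the coordinate point $(0, \dots, 1, \dots, 0)$ solves the equation, while if all $a_i \neq 0$ then $\sum a_i x_i^2$ is a non-degenerate diagonal quadratic form in $\geq 5$ variables over $\Q_p$, which is always isotropic (cf.\ \cite{Serre_course}). So $\rho_p(n, 2) = 1$ for $n \geq 4$.

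For (1) and (2) with $p$ odd, the plan is to stratify $\Z_p^{n+1}$ by the parity vector $\delta = (\delta_0, \dots, \delta_n)$, where $\delta_i := v_p(a_i) \bmod 2$. Writing $a_i = p^{e_i} u_i$ with $u_i \in \Z_p^\times$ and changing variables $y_i = p^{\lfloor e_i/2 \rfloor} x_i$, the form $\sum a_i x_i^2 = 0$ is $\Q_p$-equivalent to
\[
A(y_I) + p\, B(y_J) = 0, \qquad A := \sum_{i \in I} u_i y_i^2, \quad B := \sum_{j \in J} u_j y_j^2,
\]
with $I := \{i : \delta_i = 0\}$ and $J := \{j : \delta_j = 1\}$. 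For $p$ odd, any unit-coefficient diagonal form in $\geq 3$ variables is isotropic over $\Z_p$ by Chevalley--Warning plus Hensel, while the binary form $u_i y_i^2 + u_j y_j^2$ is isotropic iff $-u_i/u_j$ is a square in $\Z_p^\times$ (probability $1/2$). A valuation comparison shows that $A + p B$ is isotropic iff $A$ or $B$ is isotropic in its own variables (if both are anisotropic binaries, $A$ takes only values of even $p$-valuation and $p B$ only odd, so they cannot cancel). Setting $\alpha := \Pr(\delta_i = 0) = (1-p^{-1})/(1-p^{-2})$, so that $1 - \alpha = p^{-1} \alpha$, the only patterns that can fail for $n = 2$ are $|I| \in \{1, 2\}$, each being soluble with conditional probability $1/2$, giving
\[
1 - \rho_p(2, 2) = 3\alpha(1-\alpha) \cdot \tfrac{1}{2} = \tfrac{3}{2} \alpha (1-\alpha) = \tfrac{3}{2} p^{-1} \alpha^2,
\]
and for $n = 3$ only the balanced pattern $|I| = |J| = 2$ with both binary subforms anisotropic (probability $1/4$) contributes
\[
1 - \rho_p(3, 2) = \binom{4}{2} \alpha^2 (1-\alpha)^2 \cdot \tfrac{1}{4} = \tfrac{3}{2} p^{-2} \alpha^4,
\]
matching the stated formulas.

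For $p = 2$ the same skeleton applies, but the analysis is appreciably more delicate for two reasons: $\Z_2^\times / (\Z_2^\times)^2$ has order four (representatives distinguished modulo $8$), and Hensel's lemma for $X^2 - u$ at $p = 2$ requires $u \equiv 1 \pmod 8$ rather than mod $p$, so even a form in three unit variables need not be isotropic (e.g.\ $y_0^2 + y_1^2 + y_2^2 = 0$ is insoluble over $\Q_2$). The approach is to enumerate the relevant $(v_2(a_i), a_i \bmod 8)$-patterns, apply the standard mod-$8$ isotropy criterion for unit ternary and quaternary forms over $\Q_2$ to classify each as soluble or insoluble, and then sum the measures: a pattern with valuations $e_0, \dots, e_n$ and a specified residue tuple mod $8$ has measure $2^{-\sum e_i - 3(n+1)}$. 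Summing then produces $\rho_2(2, 2) = 7/12$ and $\rho_2(3, 2) = 1231/1296$. The main obstacle is precisely this dyadic bookkeeping, especially for $n = 3$: the four square classes together with the mixed-valuation subcases and the mod-$8$ criterion yield substantially more cases than in the odd-prime analysis, and care is needed to organize them without double-counting.
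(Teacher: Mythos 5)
Your treatment of the odd primes and of part (3) is correct and complete in outline: the reduction to a unit form $A(y_I)+pB(y_J)$ via the parity of the valuations, the fact (for $p$ odd) that this is isotropic iff $A$ or $B$ is, the $1/2$-probability for an anisotropic binary unit form, and the measure bookkeeping with $\alpha=(1-p^{-1})/(1-p^{-2})$, $1-\alpha=p^{-1}\alpha$ all check out and reproduce the stated formulas for $n=2,3$; this is essentially the same computation as the paper's general \cref{generic} specialized to $k=2$ (your "both subforms anisotropic" condition is the paper's type III classification), just phrased directly in the language of quadratic forms, which is a perfectly good and arguably cleaner route for $k=2$. The $n\geq 4$ case via isotropy of nondegenerate forms in $\geq 5$ variables over every $\Q_p$ matches the paper's citation of Serre.

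The genuine gap is at $p=2$, which is precisely where the theorem's specific values $7/12$ and $1231/1296$ come from. What you give there is a plan, not a proof: you correctly identify that the odd-$p$ criterion "$A$ or $B$ isotropic" fails dyadically (indeed $\langle 1,1,14\rangle$ is isotropic over $\Q_2$ although both $\langle 1,1\rangle$ and $\langle 7\rangle$ are anisotropic), and you propose to enumerate $(v_2(a_i),\,a_i\bmod 8)$-patterns and apply a mod-$8$ isotropy criterion, but you never carry out the enumeration, never identify which classes are insoluble, and never perform the measure summation; the two rational numbers are simply asserted to "come out". This case-by-case dyadic analysis is exactly the content of the paper's \cref{quad_curve,quad_surface} (for $n=3$ the insoluble classes are precisely $(1,1,1,1)$, $(1,1,5,5)$, $(1,1,2,2)$, $(1,1,10,10)$, $(1,3,2,6)$, $(1,3,10,14)$, $(1,5,6,14)$, several of which are not obvious without explicit descent or mod-$8$ arguments), followed by a nontrivial orbit/measure computation using the expansion factor $\kappa_2(n,2)$ and the count of $\mathfrak{S}_{n+1}\times(\Z_2^\times/\Z_2^{\times 2})^{n+1}$-translates of each representative. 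You also need, even to make your pattern-by-pattern sum finite, the reduction of arbitrary valuations to $e_i\in\{0,1\}$ up to scaling by squares and a common unit, i.e.\ the equivalence $\simeq$ and the geometric-series factor; this is implicit in your rescaling but must be made explicit before measures of the form $2^{-\sum e_i-3(n+1)}$ can be summed coherently. As it stands, parts (1) and (2) at $p=2$ are unproved in your write-up.
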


Note that the value $\rho_{\infty}(n, 2)$ equals $1-2^{-n}$ for every $n \in \Z_{\geq 2}$.

\begin{theorem} [$k = 3$] \label{cubic_main}
	Suppose that $k = 3$. 
	\begin{enumerate}	
		\item 
		If $n = 2$, then
		\[
		\rho_p(2, 3)
		= \begin{cases}
		\displaystyle \frac{13831}{19773} &\text{if} \ p=3, \\
		\displaystyle 1 - 2p^{-1}\left(\frac{1 - p^{-1}}{1 - p^{-3}}\right) &\text{if $p \equiv 1 \bmod 3$}, \\ 
		\displaystyle 1 - 6p^{-3}\left(\frac{1-p^{-1}}{1-p^{-3}}\right)^3 &\text{if $p\equiv 2 \bmod 3$}. 
		\end{cases}
		\]
		
		\item $($= \cite[Theorem 2.2]{BBL}$)$
		If $n = 3$, then
		\[
		\rho_p(3, 3)
		= \begin{cases}
		\displaystyle \frac{6391}{6591} &\text{if $p = 3$},\\
		\displaystyle 1 - \frac{8}{3} p^{-2}(1+p^{-1})^{2} \left(\frac{1-p^{-1}}{1-p^{-3}}\right)^3
		&\text{if $p \equiv 1 \bmod{3}$}, \\ 
		\displaystyle 1 &\text{if $p \equiv 2 \bmod{3}$}.
		\end{cases}
		\]
		
		\item
		If $n = 4$, then
		\[
		\rho_p(4, 3)
		= \begin{cases}
		\displaystyle 1 - \frac{40}{3}p^{-4} \left( \frac{1-p^{-1}}{1-p^{-3}} \right)^{4} &\text{if $p \equiv 1 \bmod 3$}, \\
		1  &\text{otherwise}. 
		\end{cases}
		\]
		
		\item
		If $n = 5$, then
		\[
		\rho_p(5, 3)
		= \begin{cases}
		\displaystyle 1- \frac{80}{3}p^{-6} \left( \frac{1-p^{-1}}{1-p^{-3}} \right)^{6} &\text{if $p \equiv 1 \bmod 3$}, \\
		1  &\text{otherwise}. 
		\end{cases}
		\]
		
		\item $($cf.\ \cite[Theorem 2]{Lewis_cubic}$)$
		If $n\geq 6$, 
		then $\rho_p(n, 3) = 1$.
	\end{enumerate}
\end{theorem}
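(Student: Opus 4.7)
The plan is to fix $n \in \{2, 3, 4, 5\}$ and, for each residue class of $p$ modulo $3$ (and for $p = 3$ separately), compute the Haar-measure of $\boldsymbol{a} \in \mathbb{Z}_p^{n+1}$ for which $X_{\boldsymbol{a}}^{3}(\mathbb{Q}_p) \neq \emptyset$. The overarching idea is to stratify by the $p$-adic valuations of the $a_i$ and exploit the scaling invariance of the equation $\sum a_i x_i^3 = 0$: solubility is unchanged when one multiplies all coefficients by a common cube, or rescales any single $a_i$ by a cube in $\mathbb{Q}_p^\times$. Thus after ordering coefficients by valuation one is reduced to finitely many normal forms in which each $v_p(a_i) \in \{0,1,2\}$ (the representatives of $\mathbb{Q}_p^\times/(\mathbb{Q}_p^\times)^3$), and the total measure is recovered from the finite answer by a geometric series whose typical factor is $\bigl(\tfrac{1-p^{-1}}{1-p^{-3}}\bigr)^{n+1}$, accounting for the normalization of all coordinates up to a fixed cube-class.

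For $n \geq 6$, the statement $\rho_p(n,3) = 1$ is the direct $p$-adic translation of Lewis's theorem cited in the introduction. For $p \neq 3$ and $p \equiv 2 \pmod{3}$, cubing is a bijection on $\mathbb{F}_p^\times$, so the mod-$p$ reduction of a coefficient pattern with at least two units is automatically soluble by Chevalley--Warning, and a smooth $\mathbb{F}_p$-point lifts by Hensel's lemma. Only degenerate valuation patterns can obstruct solubility; counting them and summing the geometric series yields $\rho_p(n,3) = 1$ for $n \geq 3$, and for $n = 2$ gives the stated expression $1 - 6 p^{-3}\bigl(\tfrac{1-p^{-1}}{1-p^{-3}}\bigr)^{3}$.

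For $p \equiv 1 \pmod{3}$ with $p \neq 3$, cubes form an index-$3$ subgroup of $\mathbb{F}_p^\times$, so after stripping valuations one must decide for each pattern of cube-classes of the unit coefficients whether $\sum \bar{a}_i x_i^3 = 0$ has a non-trivial $\mathbb{F}_p$-point. Again Hensel lifts any smooth $\mathbb{F}_p$-point, so only the cube-class patterns without such a point contribute to the complement. For $n = 2$ this is an elementary count that I expect to reproduce the factor $2 p^{-1}\bigl(\tfrac{1-p^{-1}}{1-p^{-3}}\bigr)$, while for $n = 3, 4, 5$ the insoluble configurations become scarcer and are counted using the cube-character sums on $\mathbb{F}_p$; combining each count with the scaling-valuation geometric series should produce the asserted closed forms. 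The quoted formula for $n = 3$ from \cite{BBL} serves as a consistency check on this bookkeeping.

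The hard part is $p = 3$, as reflected by the denominators $19773$ and $6591$. Here Hensel's lemma in the naive form fails because $3x^2$ vanishes mod $3$, so one must lift from solutions modulo $3^k$ for some $k \geq 2$ using the refined criterion $v_3\bigl(\sum a_i x_i^3\bigr) > 2\, v_3(3 a_j x_j^2)$ at a coordinate $x_j \in \mathbb{Z}_3^\times$. My plan is to enumerate the residue patterns of $(a_0, \dots, a_n) \bmod 27$ (or a slightly higher power when necessary), partitioning by which $v_3(a_i)$ are zero, and then to check solubility on each pattern by direct search, tallying the resulting $3$-adic measures. This case is expected to consume most of the effort and is the main obstacle; the remaining cases are essentially routine Hensel plus the geometric-series bookkeeping once the right normal forms are identified, which is presumably the content of the general strategy referred to in the introduction.
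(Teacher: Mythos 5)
Your strategy is essentially the paper's own: normalize coefficients by cubes and permutations so that each valuation lies in $\{0,1,2\}$, recover measures via the factor $\bigl(\tfrac{1-p^{-1}}{1-p^{-3}}\bigr)^{n+1}$, handle $p\neq 3$ by reduction modulo $p$ plus Hensel lifting, and handle $p=3$ by exhaustive analysis modulo powers of $3$. The problem is that, as written, this is a plan rather than a proof: none of the numbers the theorem asserts is actually derived. For $p\equiv 1\bmod 3$ you never determine \emph{which} valuation/cube-class patterns are insoluble; the needed classification (the paper's ``type III'' configurations: pairs $(p^{k}u,\,-p^{k}ut)$ with $t$ a non-cube together with singletons $p^{k'}u'$, all exponents distinct in $\{0,1,2\}$) requires both the valuation-descent argument for insolubility and the Hasse--Weil bound to show that three units in a common valuation class always yield a smooth $\mathbb{F}_p$-point --- for $k=3$ the genus-one bound $p+1-2\sqrt{p}>0$ holds for every $p$, which is exactly why no exceptional small primes occur; you gesture at ``cube-character sums'' but do not carry this out, and only after the classification does the orbit/measure bookkeeping produce the constants $2$, $\tfrac{8}{3}$, $\tfrac{40}{3}$, $\tfrac{80}{3}$. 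Likewise the entire $p=3$ case, which is where $\tfrac{13831}{19773}$ and $\tfrac{6391}{6591}$ come from, is deferred to a future computer search modulo $27$; the paper instead proves an explicit solubility criterion for plane diagonal cubics over $\mathbb{Q}_3$ (e.g.\ $X^{3}_{(u_0,u_1,9u_2)}$ is soluble iff $u_0\equiv\pm u_1\bmod 9$, $X^{3}_{(u_0,3u_1,9u_2)}$ is never soluble, and the $\{\pm1,\pm2,\pm4\}\bmod 9$ criterion for three units), reduces $n\geq 4$ to the subcurve in the coordinates of valuation pattern $(0,0,0,2,2)$, and then performs the orbit-measure computation; for $n=3$, $p=3$ it cites the computation of Bright--Browning--Loughran rather than redoing it. Until you supply these classifications and the measure computations, the theorem's exact values are unverified.

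Two smaller points. Chevalley--Warning does not apply to a diagonal cubic in two variables (it requires more variables than the degree); the correct and sufficient reason, which you also state, is that cubing is a bijection on $\mathbb{Z}_p^{\times}$ when $p\equiv 2\bmod 3$, so any two coefficients of equal valuation class already give a $\mathbb{Q}_p$-point. Also, for $p\equiv 2\bmod 3$ and $n=2$ you must additionally prove insolubility of the pattern with valuations $(0,1,2)$ (a short $3$-adic-style descent), not merely solubility of the remaining patterns, before the count $6p^{-3}\bigl(\tfrac{1-p^{-1}}{1-p^{-3}}\bigr)^{3}$ is justified.
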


Note that the value $\rho_{\infty}(n, 3)$ equals $1$ for every $n \in \Z_{\geq 2}$.

The plan of this paper is as follows. 
In \S2 we give a general upper bound for the value $\rho_{p}(n, k)$. 
It is sufficient for determination of the value $\rho_{p}(n, k)$ for a generic prime. 
In \S3 we calculate the values $\rho_{p}(n, k)$ for \textit{pathological} pairs $(p, k) = (2,2), (3,3)$ and complete the proofs of \cref{quadratic_main,cubic_main} hence of \cref{applicaiton}. 
Our strategy in \S3 works also for general $n$ and $k$ in principle. 
In \S4 we discuss a consequence for the proportions of (uni)rationality.

\subsection*{Notation}
For each prime $p$,
let $\Z_{p}$ be the ring of $p$-adic integers
and $\Q_{p}$ be its field of fractions.
We denote the (additive) $p$-adic valuation map by $v_{p} : \Q_{p}^{\times} \to \Z$,
and we use the same symbol also for its direct sum $v_{p} : (\Q_{p}^{\times})^{\oplus n+1} \to \Z^{\oplus n+1}$
defined by $v_{p}(\bb{a}) := (v_{p}(a_{0}), \dots, v_{p}(a_{n}))$ for every $\bb{a} = (a_{0}, \dots, a_{n}) \in (\Q_{p}^{\times})^{\oplus n+1}$.

\section{$\rho_{p}(n, k)$ for generic primes}

For every $k, r \in \Z_{\geq 0}$,
set $[k] := \{ 0, 1, \dots, k-1 \} \subset \Z$,
and let $[k]^{(r)}$ be the set of subsets of $[k]$ consisting of $r$ elements.
For every $K = \{ k_{1}, \dots, k_{d} \} \subset [k]$,
set $\wt(K) := k_{1}+ \dots +k_{d}$.

In a similar manner to \cite[\S 2.1.1]{BBL}, 
we define an equivalence relation $\simeq$ on $\Q_{p}^{\oplus n+1}$ as follows. 
Set $\Gamma_{p}(n, k) := \Q_{p}^{\times} \times
	\left( (\Q_{p}^{\times k})^{\oplus n+1} \rtimes \mathfrak{S}_{n+1} \right)$.
Here, the semi-direct product $(\Q_{p}^{\times k})^{\oplus n+1} \rtimes \mathfrak{S}_{n+1}$
is defined by the natural left permutation action of $\mathfrak{S}_{n+1}$.
Define an action of $\Gamma_{p}(n, k)$ on $\Q_{p}^{\oplus n+1}$ by
\[
	(\alpha; \alpha_{0}^{k}, \dots, \alpha_{n}^{k}; \sigma) \cdot (a_{0}, \dots, a_{n})
	:= (\alpha a_{\sigma(0)}(\alpha_{0})^{k}, \dots, \alpha a_{\sigma(n)}(\alpha_{n})^{k}).
\]
Define an equivalence relation $\simeq$ on $\Q_{p}^{\oplus n+1}$ by
$\bb{a} \simeq \bb{b}$ if there exists $\gamma \in \Gamma_{p}(n, k)$ such that $\bb{a} = \gamma(\bb{b})$.
Then, $X_{\bb{a}}^{k}$ is isomorphic to $X_{\bb{b}}^{k}$ over $\Q_{p}$ if $\bb{a} \simeq \bb{b}$. 
Moreover,
since the set $\Q_{p}^{\oplus n+1} / \simeq$ is finite,
we can calculate $\rho_{p}(n, k)$ for each fixed $n, k, p$ at least by tour de force analysis. 
In fact, by using the following \cref{generic}, 
we can calculate $\rho_{p}(n, k)$ for many $n, k, p$ uniformly.   

In what follows,
the measure $\mu_{p}(\left\{ \bb{a} \in \Z_{p}^{\oplus n+1} \setmid \cdots \right\})$  is abbreviated by $\mu_{p}(\cdots)$.
We set
\[
\kappa_{p}(n, k)
:= \frac{\mu_{p}\left( v_{p}(\bb{a}) \equiv \bb{0} \pmod{k} \right)}{\mu_{p}\left( v_{p}(\bb{a}) = \bb{0} \right)}
= \left( \sum_{e_{0}, \dots, e_{n} \geq 0} p^{-ke_{0}- \dots -ke_{n}} \right)
= (1-p^{-k})^{-n-1}.
\]
Intuitively, this quantity gives the ``expansion ratio by $(p^{k\Z_{\geq 0}})^{\oplus n+1}$-action". 

Let $\Delta_{n}$ be the set $\left\{\bb{a}\in\Z_{p}^{\oplus n+1} \setmid \prod_{i = 0}^{n}a_{i} = 0\right\}$. 
Then we have
\[
\mu_{p}(\text{$X_{\bb{a}}^{k}$ is singular})
= \mu_{p}\left(\Delta_{n} \right)
= 0.
\] 
Therefore, it is sufficient to consider $\bb{a}\in\Z^{\oplus n+1}\setminus\Delta_{n}$,
which are classified (not exculsively) into three types as follows.

\begin{definition}
	Let $n, k \in \Z_{\geq 2}$,
	$p$ be a prime,
	and $\bb{a} \in \Z_{p}^{\oplus n+1} \setminus \Delta_{n}$.
	
	\begin{enumerate}
		\item
		If there exist some $u_{0}, \dots, u_{n} \in \Z_{p}^{\times}$
		and $k_{3}, ..., k_{n} \in [k]$
		such that
		\[
		\bb{a} \simeq (u_{0}, u_{1}, u_{2}, p^{k_{3}}u_{3}, \dots, p^{k_{n}}u_{n}),
		\]
		then we say that $\bb{a}$ is of type I.
		
		\item
		If there exist some $u_{1}, \dots, u_{n} \in \Z_{p}^{\times}$, $t \in \Z_{p}^{\times k}$,
		and $k_{2}, ..., k_{n} \in [k]$
		such that
		\[
		\bb{a} \simeq (u_{1}, -u_{1}t, p^{k_{2}}u_{2}, p^{k_{3}}u_{3}, \dots, p^{k_{n}}u_{n}),
		\]
		then we say that $\bb{a}$ is of type II.
		
		\item
		If there exist some
		$r \in \Z_{\geq 0}$,
		$u_{1}, \dots, u_{n+1-r} \in \Z_{p}^{\times}$,
		$t_{1}, \dots, t_{r} \in \Z_{p}^{\times} \setminus \Z_{p}^{\times k}$,
		and distinct $k_{1}, ..., k_{n+1-r} \in [k]$
		such that
		\begin{equation} \label{insoluble}
		\bb{a} \simeq (p^{k_{1}}u_{1}, -p^{k_{1}}u_{1}t_{1}, \dots, p^{k_{r}}u_{r}, -p^{k_{r}}u_{r}t_{r},
		p^{k_{r+1}}u_{r+1}, \dots, p^{k_{n+1-r}}u_{n+1-r}),
		\tag{$\ast$}
		\end{equation}
		then we say that $\bb{a}$ is of type III.
	\end{enumerate}
\end{definition}

Moreover, by the Hasse-Weil bound for
non-singular projective curves defined by $u_{0}x_{0}^{k}+u_{1}x_{1}^{k}+u_{2}x_{2}^{k} = 0$ ($u_{0}, u_{1}, u_{2} \in \Z_{p}^{\times}$),
we obtain the following.

\begin{lemma} \label{generic_criterion}
	Let $n, k \in \Z_{\geq 2}$,
	$p$ be a prime such that $\gcd(p, k) = 1$,
	and $\bb{a} \in \Z_{p}^{\oplus n+1} \setminus \Delta_{n}$.
	Then, the following statements hold.
	
	\begin{enumerate}
		\item
		Suppose that $p \geq (k-1)(k-2)$ or $\gcd(p-1, k) = 1$.
		If $\bb{a}$ is of type I,
		then $X_{\bb{a}}^{k}(\Q_{p}) \neq \emptyset$.
		
		\item
		If $\bb{a}$ is of type II,
		then $X_{\bb{a}}^{k}(\Q_{p}) \neq \emptyset$.
		
		\item
		If $\bb{a}$ is of type III,
		then $X_{\bb{a}}^{k}(\Q_{p}) = \emptyset$.
	\end{enumerate}
\end{lemma}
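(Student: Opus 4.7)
The plan is to handle (1) and (2) by producing an $\F_{p}$-point of $X_{\bb{a}}^{k}$ at which the reduction is smooth, then invoking Hensel's lemma; this lifting step is available because the assumption $\gcd(p,k) = 1$ guarantees that some partial derivative of the diagonal form is a $p$-adic unit at any unit coordinate. Part (3) is handled by a direct $p$-adic valuation analysis, exploiting that the exponents $k_{j}$ are distinct elements of $[k]$.

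For (2), writing $t = s^{k}$ with $s \in \Z_{p}^{\times}$, the projective point $(s : 1 : 0 : \cdots : 0)$ already lies on $X_{\bb{a}}^{k}$. For (1), set $x_{3} = \cdots = x_{n} = 0$ to reduce the question to the existence of a $\Q_{p}$-point on the smooth diagonal plane curve $C : u_{0}x_{0}^{k} + u_{1}x_{1}^{k} + u_{2}x_{2}^{k} = 0$. If $\gcd(p-1, k) = 1$, then every element of $\F_{p}^{\times}$ is a $k$-th power, so one may take $x_{1} = 0$ and let $x_{2}$ be a $k$-th root of $-u_{0}/u_{2}$. If $p \geq (k-1)(k-2)$, one appeals to the Hasse-Weil estimate for the smooth plane curve $C$ of genus $(k-1)(k-2)/2$, possibly sharpened by a direct Jacobi-sum count for diagonal curves, to obtain an $\F_{p}$-point. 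In either case, smoothness of the reduction allows Hensel to lift to a $\Q_{p}$-point of $X_{\bb{a}}^{k}$.

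For (3), suppose for contradiction that $X_{\bb{a}}^{k}(\Q_{p}) \ne \emptyset$, and choose a representative with coordinates in $\Z_{p}$ and at least one unit coordinate. The $p$-adic valuation of the $j$-th term of $(\ast)$ has the form $k_{j} + k \cdot (\text{integer})$, so is congruent to $k_{j}$ modulo $k$. Because the $k_{j}$ are distinct in $[k]$, terms indexed by distinct $j$'s have distinct valuations, and so the minimum valuation $V$ among the nonzero terms is realised by terms all sharing the same $k_{i}$. If only a single term realises $V$, that term alone contributes modulo $p^{V+1}$, so the left-hand side of $(\ast)$ does not vanish, a contradiction. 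Otherwise $V$ is realised by both members of a pair $(p^{k_{i}}u_{i}x_{i}^{k}, -p^{k_{i}}u_{i}t_{i}y_{i}^{k})$, forcing $v_{p}(x_{i}) = v_{p}(y_{i}) = (V-k_{i})/k$; writing $x_{i}, y_{i}$ as $p^{(V-k_{i})/k}$ times units $\tilde{x}_{i}, \tilde{y}_{i} \in \Z_{p}^{\times}$, cancellation modulo $p^{V+1}$ forces $\tilde{x}_{i}^{k} \equiv t_{i}\tilde{y}_{i}^{k} \pmod{p}$. This exhibits $t_{i}$ as a $k$-th power modulo $p$, and Hensel's lemma (applicable since $\gcd(p,k) = 1$) lifts the congruence to $t_{i} \in \Z_{p}^{\times k}$, contradicting the assumption on $t_{i}$.

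The main obstacle is part (3): one must simultaneously juggle the scaling freedom of the projective point, the possibility of vanishing coordinates, and the paired bookkeeping of $(\ast)$. A secondary subtlety is the sharpness of the Hasse-Weil estimate in (1); the plain bound $p + 1 - (k-1)(k-2)\sqrt{p}$ is not positive for every $p \geq (k-1)(k-2)$, so either a refined Jacobi-sum count for diagonal curves must be invoked or the small remaining cases must be verified by hand.
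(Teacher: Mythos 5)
Your handling of (2), (3), and the $\gcd(p-1,k)=1$ branch of (1) is complete and is exactly the argument the paper leaves implicit (the paper's entire justification is the one-line appeal to the Hasse--Weil bound before the lemma): the point $(s:1:0:\cdots:0)$ for type II, Hensel lifting of $k$-th roots (valid since $\gcd(p,k)=1$), and for type III the valuation bookkeeping in which the distinct exponents $k_{j}\in[k]$ force the minimal-valuation terms of a primitive solution to come from a single pair, so that $t_{i}$ would be a $k$-th power modulo $p$ and hence, again by Hensel, lie in $\Z_{p}^{\times k}$ --- a contradiction. That part of your write-up I would accept as is.

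The ``secondary subtlety'' you flag in (1) is, however, a genuine gap, and it cannot be closed in either of the two ways you propose (a sharper Jacobi-sum count or a finite hand check), because for general $k$ the branch ``$p\geq(k-1)(k-2)$'' of statement (1) is simply false. Take $k=4$, $p=29\geq 6=(k-1)(k-2)$, $\gcd(29,4)=1$, and $\bb{a}=(1,1,1)$, which is of type I for $n=2$: the fourth powers in $\F_{29}^{\times}$ are $\{1,7,16,20,23,24,25\}$, the element $-1$ is not among them, and no two or three of them sum to $0$ in $\F_{29}$; hence $x_{0}^{4}+x_{1}^{4}+x_{2}^{4}\equiv 0 \pmod{29}$ has only the trivial solution, while a $\Q_{29}$-point of $X_{(1,1,1)}^{4}$ could be scaled to a primitive $\Z_{29}$-vector and would reduce to a nontrivial one. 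So $X_{(1,1,1)}^{4}(\Q_{29})=\emptyset$. What the Hasse--Weil bound actually gives (working with $d=\gcd(p-1,k)$, since $x\mapsto x^{k}$ and $x\mapsto x^{d}$ have the same image, so the relevant genus is $(d-1)(d-2)/2$) is solubility whenever $p+1>(d-1)(d-2)\sqrt{p}$, e.g.\ whenever $p\geq\bigl((k-1)(k-2)\bigr)^{2}$; the stated threshold $p\geq(k-1)(k-2)$ is too weak. The paper's own citation of Hasse--Weil suffers from the same defect, and the discrepancy is harmless for everything the paper actually uses the lemma for ($k=2,3$, where $(d-1)(d-2)\leq 2$ and the bound is trivially positive), but in your proof the correct move is to strengthen the hypothesis of (1) (or restate it in terms of $d$), not to hope that a refined point count or case check rescues the threshold as written.
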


\begin{proposition} \label{generic}
	Let $n, k \in \Z_{\geq 2}$,
	and $p$ be a prime.
	Suppose that $\gcd(p, k) = 1$.
	Then, we have
	\begin{align*}
	\rho_{p}(n, k)
	&\leq 1 - (n+1)! \left( \frac{1-p^{-1}}{1-p^{-k}} \right)^{n+1}
	\sum_{r \geq 0} \left( \frac{1}{2} - \frac{1}{2\gcd(p-1, k)} \right)^{r}
	\sum_{\substack{K \subset [k]^{(r)} \\ L \subset [k]^{(n+1-2r)} \\ \text{s.t. $K \cap L = \emptyset$}}}
	p^{-2\wt(K)-\wt(L)}
	\end{align*}
	Moreover,
	if $p \geq (k-1)(k-2)$ or $\gcd(p-1, k) = 1$,
	then the equality holds.
	\footnote{
	In fact, if one of the following conditions holds, 
	then $X_{\bb{a}}^{k}(\Q_p)\neq\emptyset$ for all $\bb{a}\in\Z^{\oplus n+1}\setminus \Delta_{n}$:
	\begin{enumerate}
		\item $p \geq (k-1)(k-2)$ and $n \geq 2k$. 
		\item $\gcd(p-1, k) = 1$ and $n \geq k$. 
	\end{enumerate}
	In particular, if (1) or (2) holds, then  we obtain $\rho_{p}(n, k) = 1$.
	}
\end{proposition}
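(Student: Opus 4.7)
The plan is to bound $\rho_p(n, k)$ from above via $\mu_p$ of the type III locus, apply \cref{generic_criterion}(3), and then evaluate that measure combinatorially. Since $\mu_p(\Delta_n) = 0$, the problem restricts to $\bb{a} \in \Z_p^{\oplus n+1} \setminus \Delta_n$, on which \cref{generic_criterion}(3) says that every type III $\bb{a}$ is $\Q_p$-insoluble. Hence $\rho_p(n,k) \leq 1 - \mu_p(\text{type III})$, and identifying $\mu_p(\text{type III})$ with the displayed subtrahend yields the inequality. For the equality clause, I will additionally verify that, up to $\Delta_n$, the complement of type III lies in type I $\cup$ type II, both of which are soluble by \cref{generic_criterion}(1) and (2) under the hypotheses $p \geq (k-1)(k-2)$ or $\gcd(p-1, k) = 1$.

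The structural claim that every non-singular $\bb{a}$ outside type III is of type I or type II should follow from a short case analysis. I would use $\Gamma_p(n, k)$ to normalize $\bb{a}$ to the form $a_i = p^{e_i} u_i$ with $e_i \in [k]$ and $u_i \in \Z_p^\times$. If some residue class in $[k]$ is attained by at least three of the $e_i$, then a further permutation and a global rescaling puts three units on the first three coordinates, so $\bb{a}$ is of type I. Otherwise each class is attained at most twice; if some pair $\{i, j\}$ with $e_i = e_j$ has $-u_j/u_i \in \Z_p^{\times k}$, then $\bb{a}$ is of type II via that pair; and if no such type II pair exists, every same-class pair is of type III flavor while the remaining indices occupy singleton classes, so $\bb{a}$ is of type III.

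For the measure calculation, I plan to parametrize the type III locus by: the number $r \in \Z_{\geq 0}$ of pairs; an unordered partition of $\{0, \ldots, n\}$ into $r$ pairs and $n+1-2r$ singletons, with count $(n+1)!/(2^r \, r! \, (n+1-2r)!)$; disjoint $K \in [k]^{(r)}$ and $L \in [k]^{(n+1-2r)}$ of valuation classes; and a bijection of pairs onto $K$ and singletons onto $L$, with count $r!\,(n+1-2r)!$. The resulting subsets of $\Z_p^{\oplus n+1}$ are pairwise disjoint, and the measure of each factors across parts: a singleton of class $k_j' \in L$ contributes $(1-p^{-1}) p^{-k_j'}/(1-p^{-k})$ by summing $(1-p^{-1})p^{-e}$ over $e \equiv k_j' \pmod{k}$, and a pair of class $k_i \in K$ contributes $(1 - 1/\gcd(p-1, k))$ times $\left((1-p^{-1}) p^{-k_i}/(1-p^{-k})\right)^2$, where the first factor is the Haar-measure proportion of unit ratios $-u_j/u_i$ outside $\Z_p^{\times k}$. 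The bijection count $r!\,(n+1-2r)!$ cancels the corresponding denominator in the partition count; the surviving $1/2^r$ joins with $(1 - 1/\gcd(p-1, k))^r$ to yield $(1/2 - 1/(2\gcd(p-1, k)))^r$, and the geometric-series pieces collapse to the uniform prefactor $((1-p^{-1})/(1-p^{-k}))^{n+1}$, matching the stated formula.

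The main subtlety---and the place where a careless computation goes wrong---is that the pair condition in type III demands only $v_p(a_i) \equiv v_p(a_j) \pmod{k}$, not $v_p(a_i) = v_p(a_j)$, because the $\Gamma_p(n, k)$-action permits coordinate-wise rescaling by arbitrary elements of $\Q_p^{\times k}$ and hence shifts valuations by any multiple of $k$. This is what produces the full geometric series $1/(1-p^{-k})$ for each of the two coordinates of a pair; insisting on equal valuations would cost a multiplicative factor of $(1 - p^{-k})^r$ and destroy the uniform exponent $n+1$ in the prefactor.
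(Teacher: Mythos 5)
Your proposal is correct and follows essentially the same route as the paper: bound $\rho_p(n,k)$ by $1-\mu_p(\text{type III})$ via \cref{generic_criterion}(3), compute that measure by summing over $r$ and disjoint $K,L$ with the orbit count $(n+1)!/2^r$, the per-coordinate geometric series giving $\left((1-p^{-1})/(1-p^{-k})\right)^{n+1}p^{-2\wt(K)-\wt(L)}$, and the unit-ratio factor $\left(1-1/\gcd(p-1,k)\right)^r$, then get equality from \cref{generic_criterion}(1),(2). Your explicit trichotomy argument (every $\bb{a}\in\Z_p^{\oplus n+1}\setminus\Delta_n$ not of type III is of type I or II) and your remark about valuations being fixed only modulo $k$ are points the paper leaves implicit, and they are handled correctly.
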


Here, the sum with respect to $r$ is finite which runs over
$\max\{ n-k+1, 0 \} \leq r \leq \min\{ [\frac{n+1}{2}], k \}$,
where $[x]$ denotes the maximal integer not exceeding $x$.

\begin{proof}
	The whole statement is a direct consequence of \cref{generic_criterion}.
	Indeed, since the $\mathfrak{S}_{n+1}$-orbit of 
	$(p^{k_{1}}, p^{k_{1}}, \dots, p^{k_{r}}, p^{k_{r}}, p^{k_{r+1}}, p^{k_{r+2}}, \dots, p^{k_{n+1-r}})$
	with distinct $k_{i} \in [k]$
	consists of $(n+1)!/2^{r}$ vectors, 
	we obtain
	\begin{align*}
	\rho_{p}(n, k)
	&\leq 1 - \mu_{p}\left( \text{$\bb{a}$ is of type III} \right) \\
	&= 1 - \sum_{r \geq 0} \sum_{\{ k_{1}, \dots, k_{r} \} \subset [k]}
	\sum_{\substack{\{ k_{r+1}, \dots, k_{n+1-r} \} \\ \subset [k] \setminus \{ k_{1}, \dots, k_{r} \}}}
	\mu_{p}\left( \text{$\bb{a}$ satisfies \cref{insoluble} with some $u_{i}, t_{i}$} \right) \\
	&= 1 - \sum_{r \geq 0}
	\sum_{\substack{K = \{ k_{1}, \dots, k_{r} \} \subset [k]^{(r)} \\
			L = \{ k_{r+1}, \dots, k_{n+1-r} \}\subset [k]^{(n+1-2r)} \\ \text{s.t. $K \cap L = \emptyset$}}}
	\frac{(n+1)!}{2^{r}} p^{-2k_{1}- \dots -2k_{r}-k_{r+1}- \dots -k_{n+1-r}} \\
	&\qquad \times \kappa_{p}(n, k) \cdot
	\mu_{p}\left( \text{$\bb{a} = (u_{1}, -u_{1}t_{1}, \dots, u_{r}, -u_{r}t_{r}, u_{r+1}, \dots, u_{n+1-r})$
		with some $u_{i}, t_{i}$} \right) \\
	&= 1 - (n+1)! \sum_{r \geq 0} \frac{1}{2^{r}} 
	\sum_{\substack{K \subset [k]^{(r)} \\ L \subset [k]^{(n+1-2r)} \\ \text{s.t. $K \cap L = \emptyset$}}}
	p^{-2\wt(K)-\wt(L)} \\
	&\qquad \times (1-p^{-k})^{-(n+1)} \left( 1 - \frac{1}{\gcd(p-1, k)} \right)^{r} (1-p^{-1})^{n+1}
	\end{align*}
	as desired.
\end{proof}

\begin{remark}\label{pathologicalprime}
Thanks to \cref{generic},
in order to determine $\rho_{p}(n, k)$ for all primes $p$,
it is sufficient to consider the following two kinds of \textit{pathological} primes:
\begin{enumerate}
	\item
	$\gcd(p, k) \neq 1$.
	
	\item
	$p < (k-1)(k-2)$ and $\gcd(p-1, k) \neq 1$.
\end{enumerate}
The problems of these cases are as follows: 
\begin{itemize}
	\item In the case (1), every $\F_{p}$-rational point on a variety $X_{\bb{a}}^{k} \bmod{p}$ is singular. 
	\item In the case (2), a scheme $X_{\bb{a}}^{k} \bmod{p}$ may not have a $\F_{p}$-rational point. 
\end{itemize}
Anyway, since the number of pathological primes $p$ for each fixed $n$ and $k$ is finite,
we can determine $\rho_{p}(n, k)$ for all primes $p$ by tour de force
and eventually obtain $\rho_{\loc}(n, k)$.
\end{remark}

\section{$\rho_{p}(n, k)$ for pathological primes}

In this section,
we carry out tour de force analysis in order to calculate $\rho_{p}(n, k)$ for pathological primes $p$. 
Although we consider only the cases $k = 2$ and $k =3$, 
the same method also works for any $k$ in principle. 

If $k = 2$ (resp.\ 3),
then there is no prime number of second kind in \cref{pathologicalprime}.
Therefore,
it is sufficient to calculate $\rho_{2}(n, 2)$ (resp.\ $\rho_{3}(n, 3)$)
as we will do in what follows.

\subsection{The case of $k = 2$ and $p = 2$}

\begin{proposition} [$p = 2$ and $n = 2$] \label{quad_curve}
Let $u_{0}, u_{1}, u_{2} \in \Z_{2}^{\times}$.
\begin{enumerate}
\item
$X_{(u_{0}, u_{1}, u_{2})}^{2}$ has a $\Q_{2}$-rational point
if and only if
\[
	(u_{0}, u_{1}, u_{2}) \simeq (1, 1, 3), (1, 1, 7), (1, 3, 7).
\]

\item
$X_{(u_{0}, u_{1}, 2u_{2})}^{2}$ has a $\Q_{2}$-rational point
if and only if
\[
	(u_{0}, u_{1}, 2u_{2}) \simeq (1, 1, 6), (1, 1, 14), (1, 5, 2), (1, 7, 2), (1, 7, 6).
\]

\end{enumerate}
In particular,,
$X_{\bb{a}}^{2}$ has a $\Q_{2}$-rational point
if and only if
\[
	\bb{a} \simeq (1, 1, 3), (1, 1, 7), (1, 3, 7), (1, 1, 6), (1, 1, 14), (1, 5, 2), (1, 7, 2), (1, 7, 6).
\]
\end{proposition}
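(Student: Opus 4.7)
The plan is a finite case analysis: first to enumerate the $\Gamma_{2}(2,2)$-equivalence classes of $\bb{a} \in \Z_{2}^{\oplus 3} \setminus \Delta_{2}$, and then to decide the $\Q_{2}$-solubility of the conic $X_{\bb{a}}^{2}$ for each class.

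First I would choose a convenient set of representatives. Since $\Z_{2}^{\times}/(\Z_{2}^{\times})^{2} \cong (\Z/2\Z)^{2}$ with representatives $\{1,3,5,7\} \pmod{8}$, and the coordinate scalings in $\Gamma_{2}$ are by $(\Q_{2}^{\times})^{2}$-multipliers, the unit part of each coordinate can be chosen in $\{1,3,5,7\}$ and each valuation $v_{2}(a_{i})$ reduced modulo $2$. A global scaling by $\alpha = 2^{\pm 1}$ combined with coordinate re-scalings shows that the number of coordinates with $v_{2} = 1$ can be taken to be $0$ (pattern (1)) or $1$ (pattern (2)), exhausting all $\simeq$-classes. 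In pattern (1) the group $\mathfrak{S}_{3} \times \Z_{2}^{\times}/(\Z_{2}^{\times})^{2}$ acts on $\{1,3,5,7\}^{3}$ and yields exactly five orbits with representatives $(1,1,1),(1,1,3),(1,1,5),(1,1,7),(1,3,7)$; in pattern (2), the stabiliser of the valuation pattern $(0,0,1)$ reduces the acting group to $\mathfrak{S}_{2} \times \Z_{2}^{\times}/(\Z_{2}^{\times})^{2}$, and after normalising the distinguished coordinate the remaining orbits are parameterised by the ten unordered pairs $\{u_{0},u_{1}\} \subset \{1,3,5,7\}$.

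Second, I would decide solubility for each of the fifteen representatives. For the seven insoluble classes, the goal is a congruence obstruction modulo $8$: using $x^{2} \in \{0,1,4\} \pmod{8}$ for $x \in \Z_{2}$, one checks that no primitive triple $(x_{0},x_{1},x_{2}) \in \Z_{2}^{\oplus 3} \setminus 2\Z_{2}^{\oplus 3}$ satisfies $a_{0}x_{0}^{2}+a_{1}x_{1}^{2}+a_{2}x_{2}^{2} \equiv 0 \pmod{8}$, and a descent argument (dividing out a common factor of $4$) rules out imprimitive solutions. For the eight soluble classes, I would exhibit an explicit approximate solution in $\Z_{2}^{\oplus 3}$ at which the gradient has $2$-adic valuation exactly $1$; the mixed-characteristic Hensel's lemma then lifts it to a genuine $\Q_{2}$-rational point. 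For instance, $(1,2,1)$ handles $(1,1,3)$, and analogous simple triples handle the remaining soluble representatives.

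The main obstacle is that $p=2$ is the worst prime for Hensel's lemma applied to quadratic forms: every partial derivative of a diagonal quadratic form is divisible by $2$, so one must verify a mod-$8$ (rather than mod-$p$) congruence and carefully confirm the exact $2$-adic valuation of the gradient at the approximate solution. Moreover, since $\lvert \Z_{2}^{\times}/(\Z_{2}^{\times})^{2} \rvert = 4$ rather than $2$, the enumeration involves four times as many unit residue classes as at an odd prime. Both difficulties are purely computational, but care is needed not to double-count equivalence classes after scaling by $\alpha=2$; once the tables of $x^{2} \pmod{8}$ and the induced values of $a_{0}x_{0}^{2}+a_{1}x_{1}^{2}+a_{2}x_{2}^{2} \pmod{8}$ are drawn up, the classification and the final list of soluble representatives follow immediately.
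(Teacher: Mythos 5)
Your proposal is correct and follows essentially the same route as the paper: reduce to representatives with unit parts in $\{1,3,5,7\}$ and valuation pattern $(0,0,0)$ or $(0,0,1)$ modulo the $\Gamma_{2}(2,2)$-action, then settle each class either by a mod-$8$ obstruction (for the insoluble classes) or by an explicit point, i.e.\ a mod-$8$ approximate solution lifted by Hensel (for the soluble classes). The only difference is organizational: you enumerate all $5+10$ orbit representatives up front, while the paper prunes cases on the fly (e.g.\ discarding any class containing a $7$ via $\sqrt{-7}\in\Z_{2}$), but the substance of the argument is identical.
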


\begin{proof}
\begin{enumerate}
\item
We may assume that $u_{0} = 1$ and $u_{1}, u_{2} \in \{ 1, 3, 5, 7 \}$.
Moreover, if $u_{1} = 7$ or $u_{2} = 7$, then $X_{(1, u_{1}, u_{2})}^{2}$ has a rational point.
Therefore, it is sufficient to consider the following six cases.

\begin{enumerate}
\item
If $(u_{1}, u_{2}) = (1, 1)$, then we can check that $X_{(1, 1, 1)}^{2}$ has no  $\Q_{2}$-rational point
by the standard infinite descent argument with a help of modulo 8 calculation.

\item
If $(u_{1}, u_{2}) = (1, 3)$, then $X_{(1, 1, 3)}^{2}$ has a $\Q_{2}$-rational point $[\sqrt{-7} : 2 : 1]$.

\item
If $(u_{1}, u_{2}) = (1, 5)$, then we can check that $X_{(1, 1, 5)}^{2}$ has no  $\Q_{2}$-rational point
by the standard infinite descent argument with a help of modulo 8 calculation.

\item
If $(u_{1}, u_{2}) = (3, 3)$, then
$X_{(1, 3, 3)}^{2}$ is isomorphic to $X_{(1, 1, 3)}^{2}$ over $\Q_{2}$ and has a $\Q_{2}$-rational point.

\item
If $(u_{1}, u_{2}) = (3, 5)$, then
$X_{(1, 3, 5)}^{2}$ is isomorphic to $X_{(1, 7, 3)}^{2}$ over $\Q_{2}$ and has a $\Q_{2}$-rational point.

\item
If $(u_{1}, u_{2}) = (5, 5)$, then
$X_{(1, 5, 5)}^{2}$ is isomorphic to $X_{(1, 1, 5)}^{2}$ over $\Q_{2}$ and has no $\Q_{2}$-rational point.

\end{enumerate}

\item
We may assume that $u_{2} = 1$ and $u_{0}, u_{1} \in \{ 1, 3, 5, 7 \}$.
Note that if $x_{0}x_{1} \equiv 0 \pmod{2}$,
then $x_{0} \equiv x_{1} \equiv 0 \pmod{2}$ and $x_{2} \equiv 0 \pmod{2}$.
Therefore,
it is sufficient to consider rational points such that $x_{0}, x_{1} \in \Z_{2}^{\times}$.

\begin{enumerate}
\item
If $u_{1} \equiv u_{0} \pmod{8}$, i.e., $u_{1} = u_{0}$,
then we have $2u_{0}+2x_{3}^{2} \equiv 0 \pmod{8}$,
which has a $\Z_{2}$-solution only if $u_{0} \equiv 3 \pmod{4}$,
i.e., $(u_{0}, u_{1}) = (3, 3), (7, 7)$.
We can check that
$X_{(3, 3, 2)}^{2}$ (resp. $X_{(7, 7, 2)}^{2}$) has a $\Q_{2}$-rational point
$[x_{0} : x_{1} : x_{2}] = [1 : 3 : \sqrt{-15}]$ (resp. $[1 : 1 : \sqrt{-7}]$).

\item
If $u_{1} \equiv 3u_{0} \pmod{8}$,
then we have $4u_{0}+2x_{3}^{2} \equiv 0 \pmod{8}$,
which is impossible.

\item
If $u_{1} \equiv 5u_{0} \pmod{8}$,
then we have $6u_{0}+2x_{3}^{2} \equiv 0 \pmod{8}$,
which has a solution only if $u_{0} \equiv 1 \pmod{4}$,
i.e., $(u_{0}, u_{1}) = (1, 5), (5, 1)$.
We can check that
$X_{(1, 5, 2)}^{2}$ has a $\Q_{2}$-rational point
$[x_{0} : x_{1} : x_{2}] = [1 : 3 : \sqrt{-23}]$,
and $(5, 1, 2) \simeq (1, 5, 2)$.

\item
If $u_{1} \equiv 7u_{0} \pmod{8}$,
then $X_{(u_{0}, u_{1}, u_{2})}^{2}$ has a $\Q_{2}$-rational point
$[x_{0} : x_{1} : x_{2}] = [1 : \sqrt{-7} : 0]$.
Note that $(1, 7, 14) \simeq (1, 7, 2)$ 
and $(1, 7, 10) \simeq (1, 7, 6)$.
\end{enumerate}
\end{enumerate}
This completes the proof.
\end{proof}

\begin{proposition} [$p = 2$ and $n = 3$] \label{quad_surface}
Let $\bb{a} = (a_{0}, a_{1}, a_{2}, a_{3}) \in \Z_{p}^{\oplus 4} \setminus \Delta_{3}$.
Then, $X_{\bb{a}}^{2}$ has no $\Q_{2}$-rational point
if and only if
\[
	\bb{a} \simeq (1, 1, 1, 1), (1, 1, 5, 5), (1, 1, 2, 2), (1, 1, 10, 10), (1, 3, 2, 6), (1, 3, 10, 14), (1, 5, 6, 14).
\]
\end{proposition}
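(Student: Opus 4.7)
The proof strategy closely mirrors that of \cref{quad_curve}, extended to four variables. Using the $\Gamma_{2}(3, 2)$-action, we first reduce each $\bb{a}$ to a representative with $v_{2}(a_{i}) \in \{0, 1\}$ and with the unit part of each $a_{i}$ lying in $\{1, 3, 5, 7\}$. Overall scaling by $2 \in \Q_{2}^{\times}$ toggles all valuation-parities simultaneously, so we may further assume that $t := \#\{i : v_{2}(a_{i}) = 0\}$ lies in $\{2, 3, 4\}$. After additionally fixing one of the valuation-$0$ coordinates equal to $1$ (via overall scaling together with a square correction) and using $\mathfrak{S}_{4}$-permutation to order the coordinates, we obtain a finite and explicit list of candidate representatives in each of the three cases $t = 2, 3, 4$.

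For each case, the main tool is the following observation, immediate from the definition of $X_{\bb{a}}^{2}$: if some three of $a_{0}, a_{1}, a_{2}, a_{3}$ define a $\Q_{2}$-soluble ternary diagonal form (i.e., one of the eight $\simeq$-classes explicitly classified in \cref{quad_curve}), then $X_{\bb{a}}^{2}$ acquires a $\Q_{2}$-rational point by setting the fourth coordinate to zero. This shortcut disposes of all quadruples except those whose four ternary subforms are all anisotropic over $\Q_{2}$; a direct comparison against the list of insoluble ternary orbits extracted from \cref{quad_curve} shows that the remaining candidates are exactly the seven $\Gamma_{2}(3, 2)$-orbits appearing in the statement.

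It therefore remains to verify that each of the seven listed quadruples is indeed anisotropic over $\Q_{2}$. This is carried out via the standard infinite descent modulo $8$ on primitive $\Z_{2}$-solutions, in the same style as \cref{quad_curve}. The prototype is $\bb{a} = (1, 1, 1, 1)$: in a primitive solution, parity analysis modulo $2$ forces all four $x_{i}$ to be odd (any other parity configuration either yields a non-primitive solution or fails mod $4$), and then $\sum_{i = 0}^{3} x_{i}^{2} \equiv 4 \not\equiv 0 \pmod 8$, a contradiction. The remaining six representatives are handled by analogous mod-$8$ arguments, where coordinates of valuation $1$ are treated by first dividing the equation by a suitable power of $2$ so that the resulting primitive solution can be analysed coordinate by coordinate modulo $8$.

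The principal obstacle is the combinatorial bookkeeping: one must systematically list all $\Gamma_{2}(3, 2)$-orbits of $(a_{0}, a_{1}, a_{2}, a_{3}) \in \Z_{2}^{\oplus 4} \setminus \Delta_{3}$ under the above normalization, and cross-reference each ternary subform against the eight soluble ternary orbits of \cref{quad_curve} in order to isolate the precise candidates for insolubility. Once the enumeration is in hand, both the application of the ternary-subform shortcut and the seven residual mod-$8$ verifications are routine but delicate, and together they yield exactly the stated classification.
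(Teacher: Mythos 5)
There is a genuine gap at the key reduction step. Your shortcut (if some ternary subform of $\bb{a}$ is $\Q_{2}$-soluble, then $X_{\bb{a}}^{2}$ has a point with the remaining coordinate set to zero) is correct, but the converse use you make of it is not: over $\Q_{2}$ a diagonal quaternary form can be isotropic even though \emph{all four} of its ternary subforms are anisotropic, so the candidates surviving your sieve are strictly more than the seven listed orbits. Concrete counterexamples already appear in the paper's own proof: for $\bb{a} = (1,1,1,5)$ the only ternary subforms are $(1,1,1)$ and $(1,1,5)$, both insoluble by \cref{quad_curve}, yet $X_{\bb{a}}^{2}$ has the point $[\sqrt{-7}:1:1:1]$ (note $-7+1+1+5=0$ and $-7 \equiv 1 \bmod 8$, so $\sqrt{-7} \in \Q_{2}$); similarly $(1,1,1,2)$ and $(1,1,1,10)$ have all ternary subforms insoluble ($(1,1,1)$, $(1,1,2)$, resp.\ $(1,1,10) \simeq (5,5,2)$) but carry the points $[\sqrt{-7}:1:2:1]$ and $[\sqrt{-15}:1:2:1]$. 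These orbits are invariant under $\Gamma_{2}(3,2)$, so no renormalization rescues the claim that ``remaining candidates $=$ the seven orbits.'' Consequently your plan would either wrongly classify these soluble quadruples as insoluble, or the announced mod-$8$ descents for them would simply fail to produce a contradiction.

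The missing ingredient is precisely what the paper supplies: for quadruples all of whose ternary subforms are anisotropic, one must still search for $\Q_{2}$-points using all four coordinates nontrivially, typically by exploiting that $u \in \Z_{2}^{\times}$ is a square iff $u \equiv 1 \pmod 8$ (whence points like $[\sqrt{-7}:1:1:1]$, $[\sqrt{-7}:1:2:1]$, $[\sqrt{-15}:1:2:1]$, $[2:0:\sqrt{-7}:1]$, etc.). Only after such points are exhibited for every candidate outside the seven orbits does the problem reduce to verifying insolubility of the seven listed representatives, which can then indeed be done by descent with congruences modulo powers of $2$ as you describe. Your normalization of valuations and unit parts, and the mod-$8$ argument for $(1,1,1,1)$, are fine; the flaw is the unjustified identification of the all-subforms-anisotropic quadruples with the seven insoluble orbits.
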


\begin{proof}
First of all,
note that
we may assume that $a_{0} = 1$, $v_{2}(a_{1}) = 0$, $0 \leq v_{2}(a_{2}), v_{2}(a_{3}) \leq 1$,
and $a_{i}p^{-v_{p}(a_{i})} \in \{ 1, 3, 5, 7 \}$.

\begin{enumerate}
\item
Suppose that $v_{2}(\bb{a}) = (0, 0, 0, 0)$.
If $a_{i}/a_{j} \equiv -1 \pmod{8}$ for some $i, j$,
then $X_{\bb{a}}^{2}$ has a $\Q_{2}$-rational point such that
$(x_{i}, x_{j}) = (1, (-a_{i}/a_{j})^{1/2})$.
Therefore, it is sufficient to consider the cases
$\bb{a} = (1, 1, 1, 1), (1, 1, 1, 3), (1, 1, 1, 5), (1, 1, 3, 3), (1, 1, 5, 5)$.

\begin{enumerate}
\item
Suppose that $\bb{a} = (1, 1, 1, 1)$.
Then, $X_{\bb{a}}^{2}$ has no $\Q_{2}$-rational point.

\item
Suppose that $\bb{a} = (1, 1, 1, 3)$.
Then, $X_{\bb{a}}^{2}$ has a $\Q_{2}$-rational point
$[x_{0} : x_{1} : x_{2} : x_{3}] = [\sqrt{-7} : 2 : 0 : 1]$.

\item
Suppose that $\bb{a} = (1, 1, 1, 5), (1, 1, 3, 3)$.
Then, $X_{\bb{a}}^{2}$ has a $\Q_{2}$-rational point
$[x_{0} : x_{1} : x_{2} : x_{3}] = [\sqrt{-7} : 1 : 1 : 1]$.

\item
Suppose that $\bb{a} = (1, 1, 5, 5)$.
Then, $X_{\bb{a}}^{2}$ has no $\Q_{2}$-rational point.
\end{enumerate}

\item
Suppose that $v_{2}(\bb{a}) = (0, 0, 0, 1)$.
Then, by the proof of \cref{quad_curve} (2),
it is sufficient to consider the cases
$(a_{0}, a_{1}, a_{2}) = (1, 1, 1), (1, 1, 5)$.
Moreover, by \cref{quad_curve} (1),
it is sufficient to consider the cases
$(a_{0}, a_{1}, a_{2}, a_{3}) = (1, 1, 1, 2), (1, 1, 1, 10)$.
In each case,
$X_{\bb{a}}^{2}$ has a $\Q_{2}$-rational point
$[x_{0} : x_{1} : x_{2} : x_{3}] = [\sqrt{-7} : 1 : 2 : 1], [\sqrt{-15} : 1 : 2 : 1]$ respectively.

\item
Suppose that $v_{2}(\bb{a}) = (0, 0, 1, 1)$.
Then, by \cref{quad_curve} (1),
it is sufficient to consider the cases $a_{1} \in \{ 1, 3, 5 \}$.
\begin{enumerate}
\item
Suppose that $a_{1} = 1$.
Then, by \cref{quad_curve} (1),
it is sufficient to consider the cases $a_{2}, a_{3} \in \{ 2, 10 \}$.
If $(a_{2}, a_{3}) = (2, 10)$ (resp. $(10, 2)$),
then $X_{\bb{a}}^{2}$ has a $\Q_{2}$-rational point
$[x_{0} : x_{1} : x_{2} : x_{3}] = [2 : 0 : \sqrt{-7} : 1]$ (resp. $[2 : 0 : 1 : \sqrt{-7}]$).
If $a_{2} = a_{3}$,
then $X_{\bb{a}}^{2}$ has no $\Q_{2}$-rational point.

\item
Suppose that $a_{1} = 3$.
Then, by \cref{quad_curve} (1),
it is sufficient to consider the cases $(a_{2}, a_{3}) = (2, 6), (10, 14)$.
In both cases,
$X_{\bb{a}}^{2}$ has no $\Q_{2}$-rational point.

\item
Suppose that $a_{1} = 5$.
Then, by \cref{quad_curve} (1),
it is sufficient to consider the cases $a_{2}, a_{3} \in \{ 6, 14 \}$.
If $a_{2} = a_{3} = 6$ (resp. $a_{2} = a_{3} = 14$),
then $X_{\bb{a}}^{2}$ has a $\Q_{2}$-rational point
$[x_{0} : x_{1} : x_{2} : x_{3}] = [6 : 0 : \sqrt{-7} : 1]$ (resp. $[14 : 0 : \sqrt{-15} : 1]$).
If $a_{2} \neq a_{3}$,
then $X_{\bb{a}}^{2}$ has no $\Q_{2}$-rational point.
\end{enumerate}
\end{enumerate}

This completes the proof.
\end{proof}

\begin{remark}
In fact, 
the $\Gamma_{2}(3, 2)$-orbits of the $7$ vectors in the statement of \cref{quad_surface} do not intersect each other. 
We can check it by noting that
\begin{itemize}
\item 
each orbit has a representative whose components lie in $\{1,3,5,7,2,6,10,14 \}$, 

\item 
in terms of these representatives, the $\Gamma_{2}(3, 2)$-action is reduced to the action of a finite group $2^{\Z}/2^{2\Z} \times \Z_{2}^{\times}/\Z_2^{\times 2} \times \mathfrak{S}_{3}$. 
\end{itemize} 
\end{remark}

\begin{proposition} [$p = 2$ and $n \geq 4$] \label{quad_general}
Let $n \in \Z_{\geq 4}$ and $\bb{a} = (a_{0}, \dots, a_{n}) \in \Z_{2}^{\oplus n+1} \setminus \Delta_{n}$.
Then, $X_{\bb{a}}^{2}$ has a $\Q_{2}$-rational point.
\end{proposition}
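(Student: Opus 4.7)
The plan is to invoke the classical theorem of Serre (see \cite[Chapter IV, Corollary 2 to Theorem 6]{Serre_course}): every non-degenerate quadratic form over $\Q_p$ of dimension at least $5$ is isotropic. The diagonal form $q(\bb{x}) := \sum_{i=0}^{n} a_{i} x_{i}^{2}$ is non-degenerate over $\Q_2$ precisely when all of its coefficients are nonzero, which is exactly the condition $\bb{a} \notin \Delta_{n}$. For $n \geq 4$ the form $q$ has $n+1 \geq 5$ variables, so it is isotropic over $\Q_{2}$, and any nontrivial zero yields a $\Q_{2}$-rational point of $X_{\bb{a}}^{2}$.

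If one prefers a self-contained argument, an alternative route is by induction on $n$ with $n = 4$ as the base case. After rescaling by $\Gamma_{2}(n, 2)$ one may assume $\min_{i} v_{2}(a_{i}) = 0$ and $v_{2}(a_{i}) \in \{0, 1\}$; by the pigeonhole principle at least three of the five coefficients share the same valuation, and after another rescaling we may take these three, say $a_{0}, a_{1}, a_{2}$, to lie in $\Z_{2}^{\times}$. If the subconic $a_{0} x_{0}^{2} + a_{1} x_{1}^{2} + a_{2} x_{2}^{2} = 0$ has a $\Q_{2}$-rational point, we lift it to $X_{\bb{a}}^{2}$ by setting $x_{3} = x_{4} = 0$, and \cref{quad_curve}(1) identifies precisely when this happens. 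Otherwise, the triple $(a_{0}, a_{1}, a_{2})$ falls into the finite list of anisotropic triples of \cref{quad_curve}(1), and one checks, by going through the possible residues of $a_{3}, a_{4}$ modulo $8$ and $2$, that some $4$-element subfamily must produce a quadruple not appearing in the anisotropic list of \cref{quad_surface}. The step $n \mapsto n+1$ is immediate: set the new coordinate to zero.

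The hard part in the hands-on approach is organizing the bookkeeping cleanly, since the anisotropic triples from \cref{quad_curve}(1) combined with the allowed residues of $a_{3}, a_{4}$ yield a moderate number of cases, each to be checked against \cref{quad_surface}. The Serre-theorem route sidesteps this entirely, is independent of the residue characteristic, and in fact is what underlies the corresponding statement for every prime $p$; for this reason it is the proof we propose.
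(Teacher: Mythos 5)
Your primary argument is correct, and it is a genuinely different route from the paper's. The paper proves this proposition hands-on, in keeping with its ``tour de force'' strategy: using case (b) of the proof of \cref{quad_surface} it reduces to $v_{2}(\bb{a}) = \bb{0}$, then using case (a) of that proof it reduces further to $n = 4$ and $\bb{a} = (1,1,1,1,1)$ or $(1,1,3,5,5)$, and finally exhibits explicit $\Q_{2}$-points, namely $[\sqrt{-7}:2:1:1:1]$ and $[0:0:\sqrt{-15}:3:0]$. You instead quote the classical local result that every non-degenerate quadratic form in at least five variables over $\Q_{p}$ is isotropic; since $\bb{a} \notin \Delta_{n}$ means all $a_{i} \neq 0$, the diagonal form is non-degenerate and the conclusion follows at once. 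This is valid and not circular --- the paper itself points to Serre for the general statement $\rho_{p}(n,2)=1$, $n \geq 4$, in \cref{quadratic_main}(3) --- and it buys uniformity in the prime and brevity, whereas the paper's argument stays self-contained within its own $n=2,3$ classification and produces explicit points, which is more in the spirit of the computations it needs elsewhere (e.g.\ for the pathological prime $p=2$ in lower dimensions). Your alternative ``self-contained'' induction is only a sketch: the key step, that when the unit triple is anisotropic some four-coefficient subform must avoid the anisotropic list of \cref{quad_surface}, is asserted rather than checked, so on its own it would not yet constitute a proof; but since your first route is complete, the proposal stands.
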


\begin{proof}
By the case (b) in the proof for \cref{quad_surface},
it is sufficient to consider the case
$v_{2}(\bb{a}) = \bb{0}$.
Moreover,
By the case (a) in the proof for $n = 3$,
it is sufficient to consider the cases
$n = 4$ and $\bb{a} = (1, 1, 1, 1, 1), (1, 1, 3, 5, 5)$.
In each case,
$X_{\bb{a}}^{2}$ has a $\Q_{2}$-rational point
$[x_{0} : x_{1} : x_{2} : x_{3} : x_{4}] = [\sqrt{-7} : 2 : 1 : 1 : 1], [0 : 0 : \sqrt{-15} : 3 : 0]$.
\end{proof}

\begin{proof} [Proof of \cref{quadratic_main}]
For $p \neq 2$,
the statement is immediate from \cref{generic}.
For $p = 2$,
the statement is a direct consequence of \cref{quad_curve,quad_surface,quad_general} as follows.

\begin{enumerate}
\item
For $n = 2$, 
we have
\begin{align*}
\rho_{2}(2, 2)
&= \mu_{2} \left( \bb{a} \simeq (1, 1, 3), (1, 1, 7), (1, 3, 7), (1, 1, 6), (1, 1, 14), (1, 5, 2), (1, 7, 2), (1, 7, 6) \right) \\
&= \kappa_{2}(2, 2)\cdot
\frac{3 \cdot 4 + 3 \cdot 4 + 3! \cdot 4}{4^{3}}\cdot
\mu_{2} \left( v_{2}(\bb{a}) = (0, 0, 0), (1, 1, 1) \right)\\
&\quad + \kappa_{2}(2, 2) \cdot \frac{3 \cdot 4 + 3 \cdot 4 + 3! \cdot 4 + 3! \cdot 4 + 3! \cdot 4}{4^{3}} \cdot 
\mu_{2}\left( v_{2}(\bb{a}) = (0, 0, 1), (1, 1, 0) \right) \\
&= \frac{2^{6}}{3^{3}} \cdot \frac{3}{4} \cdot \left( \frac{1}{2^{3}} + \frac{1}{2^{6}} \right)
+ \frac{2^{6}}{3^{3}} \cdot \frac{3}{2} \cdot \left( \frac{1}{2^{4}} + \frac{1}{2^{5}} \right)
= \frac{1}{4} + \frac{1}{3}
= \frac{7}{12}.
\end{align*}
Here, 
in order to obtain the second equality, 
we use, for example, 
\begin{align*}
\{\bb{a}\in\Z_{p}^{\oplus 3} \mid \bb{a} \simeq (1,5,2) \}
= \coprod_{k_{0}, k_{1}, k_{2} \geq 0} (2^{2k_{0}}, 2^{2k_{1}}, 2^{2k_{2}})
\mathfrak{S}_{3}A
\end{align*}
and 
\[
\mu_{2}(A) = \frac{3! \cdot \#\left(\Z_{2}/\Z_{2}^{\times 2}\right)}{\#\left(\Z_{2}/\Z_{2}^{\times 2}\right)^{3}}\mu_{2}(v_{2}(\bb{a}) = (0,0,1), (1,1,0)),
\]
where
\[
A := \left\{ \bb{a} \in \Z_{2}^{\oplus 3} \setmid \text{$\bb{a} = (ut_{0}, 5ut_{1}, 2ut_{2}), (2ut_{0}, 10ut_{1}, ut_{2})$ with $t_{i} \in \Z_{2}^{\times 2}, u \in \Z_{2}^{\times}$} \right\},
\]

\item
Similarly, 
for $n = 3$,
we have
\begin{align*}
	\rho_{2}
	&= 1 - \mu_{2}\left( \bb{a} \simeq (1, 1, 1, 1), (1, 1, 5, 5), (1, 1, 2, 2), (1, 1, 10, 10), (1, 3, 2, 6), (1, 3, 10, 14), (1, 5, 6, 14) \right) \\
	&= 1 - \kappa_{2}(3, 2) \cdot \frac{4 + \binom{4}{2} \cdot 2}{4^{4}} \cdot  
		\mu_{2}\left( v_{2}(\bb{a}) = (0, 0, 0, 0), (1, 1, 1, 1) \right) \\
	&\quad - \kappa_{2}(3, 2) \cdot \frac{\binom{4}{2} \cdot 4 + \binom{4}{2} \cdot 4 + 4! \cdot 2 + 4! \cdot 2 + 4! \cdot 2}
	{4^{4}} \cdot 
		 \mu\left( v_{2}(\bb{a}) = (0, 0, 1, 1) \right) \\
	&= 1 - \frac{2^{8}}{3^{4}} \cdot \frac{4}{4^{3}} \cdot \left( \frac{1}{2^{4}} + \frac{1}{2^{8}} \right)
		- \frac{2^{8}}{3^{4}} \cdot \frac{48}{4^{3}} \cdot \frac{1}{2^{6}}
			= 1 - \frac{17 + 48}{2^{4} \cdot 3^{4}}
				= \frac{1231}{1296}.
\end{align*}

\item
For $n = 4$,
the statement is obvious from \cref{quad_general}.
\end{enumerate}
\end{proof}

\subsection{The case of $k = 3$ and $p = 3$}

\begin{proposition} [$p = 3$ and $n = 2$] \label{cubic_curve}
	Let $u_{0}, u_{1}, u_{2} \in \mathbb{Z}_{3}^{\times}$.
	\begin{enumerate}
		\item
		$X_{(u_{0}, 3u_{1}, 9u_{2})}^{3}$ has no $\mathbb{Q}_{3}$-rational point.
		
		\item
		$X_{(u_{0}, u_{1}, 9u_{2})}^{3}$ has a $\mathbb{Q}_{3}$-rational point
		if and only if $u_{0} \equiv \pm u_{1} \pmod{9}$.
		
		\item
		$X_{(u_{0}, u_{1}, 3u_{2})}^{3}$  has a $\mathbb{Q}_{3}$-rational point.
		
		\item
		$X_{(u_{0}, u_{1}, u_{2})}^{3}$  has a $\mathbb{Q}_{3}$-rational point
		if and only if $\{ \pm u_{0}, \pm u_{1}, \pm u_{2} \} \not\equiv \{ \pm1, \pm2, \pm4 \} \pmod{9}$
	\end{enumerate}
\end{proposition}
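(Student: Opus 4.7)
Throughout I use the key fact that $a \in \Z_{3}^{\times}$ is a cube if and only if $a \equiv \pm 1 \pmod{9}$; equivalently, $x^{3} \bmod 9 \in \{0, \pm 1\}$ for every $x \in \Z_{3}$. This follows from the decomposition $\Z_{3}^{\times} \cong \F_{3}^{\times} \times (1 + 3\Z_{3})$ together with the expansion $(1+3u)^{3} \equiv 1 + 9u \pmod{27}$. All the ``easy'' directions in (2) and (4) will be handled by exhibiting a point of the form $[1 : c : 0]$ with $c^{3} = -u_{0}/u_{1}$, which is available precisely when $u_{0} \equiv \pm u_{1} \pmod{9}$.

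For part (1), starting from any $(x_{0}, x_{1}, x_{2}) \in \Z_{3}^{\oplus 3}$ with $\min_{i} v_{3}(x_{i}) = 0$ and reducing $u_{0} x_{0}^{3} + 3 u_{1} x_{1}^{3} + 9 u_{2} x_{2}^{3} = 0$ modulo $3$ forces $3 \mid x_{0}$; dividing through by $3$ and iterating twice more forces $3 \mid x_{1}$ and then $3 \mid x_{2}$, contradicting the minimality. For part (2), a similar reduction modulo $9$ together with $x^{3} \in \{0, \pm 1\} \pmod{9}$ shows that in a minimal representative $x_{0}$ and $x_{1}$ must both be units (any other configuration forces $3 \mid x_{2}$ by descent), whence $u_{0} x_{0}^{3} + u_{1} x_{1}^{3} \equiv 0 \pmod{9}$ gives $u_{0} \equiv \pm u_{1} \pmod{9}$.

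Part (3) is the main obstacle. I search for a point of the form $[1 : y : z]$ with $y \in \Z_{3}^{\times}$ and $z \in \Z_{3}$. Choose $y_{0} \in \{\pm 1\}$ so that $u_{0} + u_{1} y_{0} \equiv 0 \pmod{3}$. If $u_{0} \equiv \pm u_{1} \pmod{9}$ then $[1 : c : 0]$ already suffices, so assume otherwise; then $C := u_{0} + u_{1} y_{0}$ lies in $3 \Z_{3}^{\times}$. Writing $y = y_{0} + 3t$ and expanding gives $u_{0} + u_{1} y^{3} \equiv C + 9 u_{1} t \pmod{27}$, so as $t$ ranges over $\Z/3\Z$ the residue of $(u_{0} + u_{1} y^{3})/3$ modulo $9$ sweeps out the three classes congruent to $C/3$ modulo $3$. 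Since $C/3 \in \Z_{3}^{\times}$ and $\{\pm u_{2}\}$ meets both nonzero residue classes modulo $3$, I can choose $t$ so that $-(u_{0} + u_{1} y^{3})/(3 u_{2}) \equiv \pm 1 \pmod{9}$, which is then a cube in $\Z_{3}^{\times}$ by the key fact; any cube root yields the desired $z$.

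For part (4), any minimal representative is forced to have all three $x_{i} \in \Z_{3}^{\times}$: if only some are units, reduction modulo $9$ gives either $\pm u_{i} \equiv 0 \pmod{9}$ (absurd) or $u_{i} \equiv \pm u_{j} \pmod{9}$ for two distinct indices, which the hypothesis $\{\pm u_{0}, \pm u_{1}, \pm u_{2}\} \equiv \{\pm 1, \pm 2, \pm 4\} \pmod{9}$ forbids. Hence $\epsilon_{0} u_{0} + \epsilon_{1} u_{1} + \epsilon_{2} u_{2} \equiv 0 \pmod{9}$ for some signs $\epsilon_{i} \in \{\pm 1\}$, and a brief enumeration of the eight signed sums $\pm 1 \pm 2 \pm 4$ modulo $9$, which yield exactly $\{1, 2, \ldots, 8\}$, rules this out. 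The converse direction (some $u_{i} \equiv \pm u_{j} \pmod{9}$) produces a $[1 : c : 0]$ on the appropriate pair of coordinates as before.
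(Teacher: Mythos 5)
Your proposal is correct, and its skeleton is the same as the paper's: everything rests on the identity $\Z_{3}^{\times 3} = \pm 1 + 9\Z_{3}$ (equivalently $x^{3} \equiv 0, \pm 1 \pmod{9}$), with parts (1), (2) and the insolubility half of (4) handled by reduction modulo $9$ plus descent/primitivity, and the soluble cases of (2) and (4) by points of the form $[1 : c : 0]$. The one place where you genuinely diverge is part (3): the paper first normalizes $u_{0}, u_{1}, u_{2}$ into the cube-class representatives $\{1, 2, 4\}$ and then simply exhibits explicit solutions of $u_{0}x_{0}^{3} + u_{1}x_{1}^{3} + 3 = 0$ in the three remaining cases $(u_{0}, u_{1}) = (1,2), (1,4), (2,4)$, whereas you give a uniform lifting argument: choose $y_{0} = \pm 1$ with $u_{0} + u_{1}y_{0} \equiv 0 \pmod 3$, note that $u_{0} \not\equiv \pm u_{1} \pmod 9$ forces $v_{3}(u_{0}+u_{1}y_{0}) = 1$, and then use $y = y_{0} + 3t$ to steer $(u_{0}+u_{1}y^{3})/3$ through all residues mod $9$ in its class mod $3$ until $-(u_{0}+u_{1}y^{3})/(3u_{2})$ lands in $\pm 1 + 9\Z_{3}$. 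Your version avoids any case enumeration (and would adapt more readily to general $k$ and $p$), at the cost of a slightly longer congruence computation; the paper's version is shorter to verify but relies on the finite normalization. Similarly, in (4) you argue directly from the set-congruence hypothesis via the signed sums $\pm 1 \pm 2 \pm 4 \not\equiv 0 \pmod 9$, which is the same computation the paper performs after normalizing to $(1,2,4)$. Minor presentational point: in (2) the parenthetical ``any other configuration forces $3 \mid x_{2}$ by descent'' compresses two steps (first $3 \mid x_{0}$ forces $3 \mid x_{1}$ mod $9$, then both force $3 \mid x_{2}$ mod $27$), so you may want to spell that out, but the logic is sound.
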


\begin{proof}
	First of all,
	note that since $\mathbb{Z}_{3}^{\times 3} = \pm1+9\mathbb{Z}_{3}$,
	we may assume that $u_{0}, u_{1}, u_{2} \in \{ 1, 2, 4 \}$
	by replacing $x_{i}$ to $w_{i}x_{i}$ with some $w_{i} \in \mathbb{Z}_{3}^{\times}$ if necessary.
	
	\begin{enumerate}
		\item[(1)(2)]
		These are immediate by an infinite descent with a help of modulo 9 reduction.
		
		\item[(3)]
		If $u_{0} = u_{1}$,
		then our curve has a $\mathbb{Q}_{3}$-rational point such that $x_{0} = -x_{1}$ and $x_{2} = 0$.
		Therefore, it is sufficient to prove that
		for every $(u_{0}, u_{1}) = (1, 2), (1, 4), (2, 4)$
		\[
		u_{0}x_{0}^{3}+u_{1}x_{1}^{3}+3 \cdot 1^{3} = 0
		\]
		has a $\mathbb{Z}_{3}$-solution,
		for instance, $(x_{0}, x_{1}) = (-1, -1), (1, -1), ((-7/2)^{1/3}, 1)$.
		
		\item[(4)]
		By the above argument,
		it is sufficient to prove that
		if $(u_{0}, u_{1}, u_{2}) = (1, 2, 4)$,
		then $X_{(1, 2, 4)}^{3}$ has no $\mathbb{Q}_{3}$-rational point
		(by contradiction as follows).
		Suppose that $X_{(1, 2, 4)}^{3}$ has a $\mathbb{Q}_{3}$-rational point $[x_{0} : x_{1} : x_{2}]$
		with $x_{i} \in \mathbb{Z}_{3}$.
		Then, the congruence
		\[
		x_{0}^{3}+2x_{1}^{3}+4x_{2}^{3} \equiv 0 \pmod{9}
		\]
		implies $x_{0} \equiv x_{1} \equiv x_{2} \equiv 0 \pmod{3}$,
		which is a contradiction by the infinite descent.
	\end{enumerate}
	This completes the proof.
\end{proof}

We define an equivalence relation $\sim$ on the group $\image v_{3} = \Z^{\oplus n+1}$ as the induced equivalence relation by $\simeq$ on $(\Q_{3}^{\times})^{\oplus n+1}$ (cf. \cite[\S 2.2.1]{BBL}).

\begin{proposition} [$p = 3$ and $n \geq 3$] \label{cubic_general}
Let $n \in \Z_{\geq 3}$ and $\bb{a} = (a_{0}, \dots, a_{n}) \in \Z_{3}^{\oplus n+1} \setminus \Delta_{n}$. 
\begin{enumerate}
\item
Suppose that $n = 3$.
Then, $X_{\bb{a}}^{k}$ has a $\mathbb{Q}_{3}$-rational point
if and only if one of the following conditions hold:
\begin{itemize}
\item
$v_{3}(\bb{a}) \sim (0, 0, 0, 0)$, $(0, 0, 0, 1)$, $(0, 0, 1, 1)$, or $(0, 0, 1, 2)$.

\item
$v_{3}(\bb{a}) \sim (0, 0, 0, 2)$,
and if one normalizes $v_{3}(\ba)$ so that \\
$v_{3}(\bb{a}) = (0, 0, 0, 2)$,
then $\{ \pm a_{0}, \pm a_{1}, \pm a_{2} \} \not\equiv \{ \pm 1, \pm 2, \pm 4 \} \pmod{9}$.
\end{itemize}

\item
Suppose that $n \geq 4$.
Then, $X_{\bb{a}}^{k}$ has a $\mathbb{Q}_{3}$-rational point.
\end{enumerate}
\end{proposition}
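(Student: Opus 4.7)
My plan is to follow the same scheme as in the proof of \cref{quad_curve,quad_surface,quad_general}: normalize $\bb{a}$ under the $\Gamma_{3}(n,3)$-action, so that $v_{3}(\bb{a})\in\{0,1,2\}^{n+1}$ is determined by its orbit under $\mathfrak{S}_{n+1}$ combined with the diagonal shift coming from scaling by powers of $3$; then, for each orbit, either exhibit a $\Q_{3}$-rational point by applying \cref{cubic_curve} to a three-variable sub-curve (obtained by setting the remaining coordinates to zero) or to a two-term equation, or rule out all $\Q_{3}$-rational points by infinite descent modulo powers of $3$.

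For part~(1), I begin by enumerating the orbits of $v_{3}(\bb{a})\in\{0,1,2\}^{4}$ under $\mathfrak{S}_{4}$ and the diagonal shift: a direct count yields exactly five, with representatives $(0,0,0,0)$, $(0,0,0,1)$, $(0,0,1,1)$, $(0,0,1,2)$, and $(0,0,0,2)$. The first four orbits are handled uniformly. For $(0,0,0,0)$, since $\lvert\Z_{3}^{\times}/\Z_{3}^{\times 3}\rvert=3$, the pigeonhole principle applied to the four unit coefficients yields indices $i\neq j$ with $a_{i}\equiv a_{j}\pmod{\Z_{3}^{\times 3}}$, so setting $x_{i}=1$, $x_{j}=(-a_{i}/a_{j})^{1/3}$, and the remaining $x_{\ell}=0$ exhibits a $\Q_{3}$-rational point; for $(0,0,0,1)$, $(0,0,1,1)$, and $(0,0,1,2)$, a three-variable sub-curve of the form $(u_{0},u_{1},3u_{2})$ is always available and is soluble by \cref{cubic_curve}(3).

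The main obstacle lies in the orbit $(0,0,0,2)$. On the first three coordinates the sub-curve is of type $(u_{0},u_{1},u_{2})$ and by \cref{cubic_curve}(4) produces a $\Q_{3}$-rational point unless precisely $\{\pm a_{0},\pm a_{1},\pm a_{2}\}\equiv\{\pm 1,\pm 2,\pm 4\}\pmod 9$, while every three-variable sub-curve containing $a_{3}$ is of type $(u_{i},u_{j},9u_{3})$ and by \cref{cubic_curve}(2) demands $a_{i}\equiv\pm a_{j}\pmod 9$, which the bad residue condition obstructs. The delicate step is then to rule out $\Q_{3}$-rational points of the full variety in this bad residue case, which I will carry out by infinite descent. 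Scaling a hypothetical solution so that $(x_{0},x_{1},x_{2},x_{3})\in\Z_{3}^{\oplus 4}$ is not divisible by $3$ in every coordinate, I reduce modulo $9$: the term $9u_{3}x_{3}^{3}$ vanishes, and since each unit $x_{i}$ satisfies $x_{i}^{3}\equiv\pm 1\pmod 9$ while each $x_{i}\in 3\Z_{3}$ makes $a_{i}x_{i}^{3}\equiv 0\pmod 9$, a case check that every nonempty signed sum $\pm 1\pm 2\pm 4$ lies in $\{\pm 1,\pm 3,\pm 5,\pm 7\}\pmod 9$ forces $x_{0},x_{1},x_{2}\in 3\Z_{3}$; then $a_{i}x_{i}^{3}\in 27\Z_{3}$ for $i\leq 2$, so $9u_{3}x_{3}^{3}\in 27\Z_{3}$ and hence $x_{3}\in 3\Z_{3}$, contradicting the normalization.

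For part~(2) with $n\geq 4$, I argue on the multiplicity triple $(m_{0},m_{1},m_{2})$ of $v_{3}(\bb{a})\in\{0,1,2\}^{n+1}$, of total $n+1\geq 5$. If some $m_{i}\geq 4$, then after a diagonal shift we have at least four unit coefficients, and the pigeonhole argument from the orbit $(0,0,0,0)$ produces a $\Q_{3}$-rational point of two-term type. Otherwise $\max_{i}m_{i}\leq 3$, and since $m_{0}+m_{1}+m_{2}\geq 5$ an appropriate diagonal shift yields $m_{0}\geq 2$ and $m_{1}\geq 1$; then a three-variable sub-curve of type $(u_{0},u_{1},3u_{2})$ is available and is soluble by \cref{cubic_curve}(3). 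Note that the bad residue obstruction of part~(1) cannot recur here, because any configuration potentially inheriting it contains at least four unit coordinates, which fall under the pigeonhole branch above.
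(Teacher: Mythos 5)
Your proposal is correct, and it is more self-contained than what the paper actually does. For part (1) the paper simply defers to \cite[\S 2.1.2]{BBL}, whereas you re-derive the classification: your orbit count (five classes of $v_{3}(\bb{a})$ under permutation and diagonal shift, represented by $(0,0,0,0)$, $(0,0,0,1)$, $(0,0,1,1)$, $(0,0,1,2)$, $(0,0,0,2)$) is right, the pigeonhole/two-term argument for four units is valid (using that $-1$ is a cube in $\Q_{3}$ and $[\Z_{3}^{\times}:\Z_{3}^{\times 3}]=3$), the soluble cases reduce correctly to \cref{cubic_curve}(3)--(4), and your mod-$9$ descent in the bad residue subcase of $(0,0,0,2)$ is sound, since no nonempty signed subset sum of $\{1,2,4\}$ vanishes modulo $9$ and then $x_{0},x_{1},x_{2}\in 3\Z_{3}$ forces $x_{3}\in 3\Z_{3}$. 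One small imprecision: the value set you quote, $\{\pm 1,\pm 3,\pm 5,\pm 7\}$, is correct only for the full three-term signed sum; singleton and two-term sums can give $\pm 2,\pm 4,\pm 6$, but all of these are still nonzero modulo $9$, so the descent is unaffected. For part (2) the paper argues in one line that it suffices to treat $n=4$ with $v_{3}(\bb{a})=(0,0,0,2,2)$ and then invokes \cref{cubic_curve}(3) on the sub-curve $X_{(a_{0},a_{3},a_{4})}^{3}$; your multiplicity-triple analysis (either some valuation class has at least four members, handled by pigeonhole, or a rotation gives the pattern $(0,0,1)$, handled by \cref{cubic_curve}(3)) covers the same ground more systematically and makes explicit why the bad $(0,0,0,2)$ obstruction cannot persist for $n\geq 4$. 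So the underlying strategy --- reduce to the $n=2$ proposition via coordinate sub-curves plus a $3$-adic descent for the insoluble pattern --- is the same, but your write-up replaces the paper's external citation and terse reduction with a complete argument.
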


\begin{proof}
\begin{enumerate}
	\item
	For the detail of the case $n = 3$, see \cite[\S 2.1.2]{BBL}. 
		
	\item
	For $n \geq 4$, it is sufficient to consider the case
	$n = 4$ and $v_{3}(\bb{a}) = (0, 0, 0, 2, 2)$.
	In this case,
	$X_{(a_{0}, a_{3}, a_{4})}^{3} \subset X_{\bb{a}}^{3}$
	has a $\mathbb{Q}_{3}$-rational point by \cref{cubic_curve} (3).
\end{enumerate}
	
This completes the proof.
\end{proof}

\begin{proof} [Proof of \cref{cubic_main}]

We can prove it in a similar manner as the proof of \cref{quadratic_main}. 

\begin{enumerate}

\item
Suppose that $n = 2$. 
By \cref{generic},
it is sufficient to consider the case $p = 3$.
By \cref{cubic_curve}, we have 
\begin{align*}
	\rho_{3}(2, 3) &= 1
	- \mu_{3}\left( \bb{a} \simeq (1,2,4) \right)\\
	&\quad - \mu_{3}\left( \bb{a} \simeq (u_{0}, tu_{0}, 3^{2}u_{1}) \ \text{with} \ u_{0}, u_{1} \in \pintt{3}, t\in \pintt{3} \setminus \Z_{3}^{\times 3} \right)\\
	&\quad - \mu_{3}\left( v_{3}(\bb{a}) \sim (0,1,2) \right)\\
	&= \displaystyle 1 
	- \kappa_{3}(2, 3) \cdot \frac{3!\cdot 1}{3^3} \cdot \mu_{3}\left( v_{3}(\bb{a}) = (0,0,0), (1,1,1), (2,2,2) \right)\\
	&\quad - \kappa_{3}(2, 3) \cdot \frac{3\cdot 3^2\cdot 2}{3^3} \cdot \mu_{3}\left( v_{3}(\bb{a}) = (0,0,2), (1,1,0) \right)\\
	&\quad - \kappa_{3}(2, 3) \cdot \frac{3!\cdot 3^3}{3^3} \cdot \mu_{3}\left( v_{3}(\bb{a}) = (0,1,2) \right)\\ 
	&= \frac{13831}{19773}. 
\end{align*}

\item
For the detail of the case $n = 3$,
see \cite[\S 2.1]{BBL}.

\item
For $n \geq 4$,
the statement is an immediate consequence of \cref{generic,cubic_general}.
\end{enumerate}
In conclusion, we proved the theorem.
\end{proof}

\subsection{Proof of \cref{applicaiton}}

\begin{proof}[Proof of \cref{applicaiton}]
Under the assumption with \cite[Theorem 1.3]{BBL} and \cite[Theorem 1.4]{Browning},
we have $\rho(n, k) = \rho_{\loc}(n, k) = \prod_{v : \text{place}}\rho_{v}(n, k)$. 
Here, note that 
the Hasse principle holds for the case $k=2$ (resp.\ $k=3$ and $n\geq 9$) due to \cite[p.48, Theorem 8]{Serre_course} (resp.\ \cite[Theorem]{Hooley_I}).
We can estimate the last infinite product by using the Riemann zeta function $\zeta(s) = \prod_{p : \text{prime}} (1 - p^{-s})^{-1}$ ($s \in \R_ {>1}$).
For example,
if $n = k = 3$,
we obtain the following inequalities
\[
\zeta(2)^{-4}\prod_{p < 10^{6}}(1 - p^{-2})^{-4}\prod_{p < 10^{6}}\rho_{p}(3,3)
<\prod_{p : \text{prime}}\rho_{p}(3, 3)
<\prod_{p < 10^{6}}\rho_{p}(3,3),
\]  
which give the desired approximation.
\end{proof}

\begin{remark}
In \cite{BBL}, Bright, Browning and Loughran obtained the formulas of $\rho_{p}(3,3)$ ($\sigma_{p}$ in the notation of \cite{BBL}) correctly.
These formulas give the approximation $\rho(3,3)=\rho_{\loc}(3,3) = 0.8964\ldots$.
However, they stated that  $\rho_{\loc}(3,3)$ ($\sigma$ in the notation of \cite{BBL}) equals $0.8605\ldots$, which is incorrect.
\end{remark}

\begin{remark}\label{the case k>3} 
For $k \geq 4$ and $n \geq 3k + 2$, 
\cite[Theorem 1.3]{Brudem-Dietmann} implies that 
\[
	\rho_{loc}(n, k) - \rho(n, k) 
	\leq \lim_{H \to \infty} \frac{cH^{n+1-\theta}}{(2H+1)^{n+1}}
	= 0  
\]
for some $c, \theta \in\R_{> 0}$, hence we obtain $\rho(n, k) = \rho_{\loc}(n, k)$ in this case too. 
\end{remark}

\section{Concluding remarks}

Recall that an algebraic variety defined over $\Q$ is said to be $\Q$-rational
if it is birationally equivalent to $\Proj{n}$ over $\Q$ for some $n$.
Set
\begin{align*}
\delta(n, k)
:= \lim_{H \to \infty} \frac{\left\{ \bb{a} \in \Z^{\oplus n+1} \setmid
	\text{$\lvert \bb{a} \rvert < H$ and $X_{\bb{a}}^{k}$ is $\Q$-rational} \right\}}
{\left\{ \bb{a} \in \Z^{\oplus n+1} \setmid \lvert \bb{a} \rvert < H \right\}}. 
\end{align*}

\begin{proposition} \label{dense_equal} 
\[
	\delta(n, 2) = \rho(n, 2) = \begin{cases}
	0 &\text{if $n = 2$,}\\
	0.8268\ldots  &\text{if $n = 3$,} \\
	1 &\text{if $n\geq 4$.}
	\end{cases} 
\]
\end{proposition}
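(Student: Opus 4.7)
The strategy is to reduce everything to the classical fact that a smooth diagonal quadric hypersurface of positive dimension over $\mathbb{Q}$ is $\mathbb{Q}$-rational if and only if it carries a $\mathbb{Q}$-rational point, and then read off the numerical values from \cref{applicaiton}.

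First, I would observe that $X_{\bb{a}}^{2}$ is geometrically smooth exactly when every $a_{i}$ is nonzero. The set of $\bb{a}\in\Z^{\oplus n+1}$ with $|\bb{a}|<H$ and some vanishing coordinate has cardinality at most $(n+1)(2H+1)^{n}=o(H^{n+1})$, so it contributes zero to both $\delta(n,2)$ and $\rho(n,2)$. Accordingly, throughout the argument I may restrict attention to $\bb{a}$ for which $X_{\bb{a}}^{2}$ is smooth of dimension $n-1\geq 1$.

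Second, I would invoke stereographic projection. For a smooth quadric $Q\subset\PP^{n}$ over $\mathbb{Q}$ of dimension $n-1\geq 1$ with a $\mathbb{Q}$-rational point $P$, projection from $P$ yields a birational map $Q\dashrightarrow\PP^{n-1}$ defined over $\mathbb{Q}$, so $Q$ is $\mathbb{Q}$-rational; this supplies the inclusion $\rho(n,2)\leq\delta(n,2)$ once one discards the density-zero singular locus. Conversely, any geometrically irreducible $\mathbb{Q}$-rational variety of positive dimension carries $\mathbb{Q}$-rational points, because the Zariski-dense set $\PP^{n-1}(\mathbb{Q})$ pulls back through the birational map on a nonempty open subset; this gives $\delta(n,2)\leq\rho(n,2)$. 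Combining these two inclusions yields the desired identification
\[
\delta(n,2)=\rho(n,2),
\]
and the numerical values then follow directly from \cref{applicaiton}(1).

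Since both ingredients — the density-zero estimate for the degenerate locus $\Delta_{n}\cap\Z^{\oplus n+1}$ and the stereographic parametrization of smooth quadrics — are entirely classical, there is no serious obstacle to carrying out the proof. The substantive content of the proposition is simply the clean coincidence of the $\mathbb{Q}$-rationality statistics and the everywhere-local-solubility statistics in the diagonal-quadric setting.
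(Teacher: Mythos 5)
Your proposal is correct and follows essentially the same route as the paper: the paper's \cref{equivalence} is exactly your stereographic-projection step (projection from a non-singular $\Q$-rational point gives a birational map to $\PP^{n-1}$, and conversely rationality forces a rational point), applied to nonsingular diagonal quadrics after discarding the density-zero locus with some $a_{i}=0$, with the numerical values then read off from \cref{applicaiton}(1). The only caveat is that this argument yields $\delta(n,2)=\rho(n,2)=1-2^{-n}$ for $n\geq 4$, consistent with \cref{applicaiton}(1), rather than the value ``$1$'' printed in the statement of \cref{dense_equal}, which neither your argument nor the paper's own reasoning actually produces.
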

 
\Cref{dense_equal} follows immediately if we apply the following proposition for nonsingular quadratic hypersurfaces.  

\begin{proposition} \label{equivalence}
	Let $Q \subset \mathbb{P}^{n}$ be a quadratic hypersurface defined over $\Q$ and $n \in \Z_{\geq 1}$.
	Then, 
	$Q$ is $\Q$-rational if and only if it has a non-singular $\Q$-rational point.
\end{proposition}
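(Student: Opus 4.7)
The plan is to use the classical \emph{stereographic projection} from a smooth $\mathbb{Q}$-rational point. One direction is essentially formal, and the other amounts to verifying that this projection is a birational equivalence defined over $\mathbb{Q}$.

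For the forward direction, assume there is a birational equivalence $\phi : Q \dashrightarrow \mathbb{P}^{m}$ over $\mathbb{Q}$. I would first observe that the smooth locus $Q^{\mathrm{sm}}$ of $Q$ is a dense open subscheme (possibly after restricting to an irreducible component; see below). Shrinking to a common open of definition, $\phi$ restricts to an isomorphism between a dense open $U \subset Q^{\mathrm{sm}}$ and a dense open $V \subset \mathbb{P}^{m}$. Since $\mathbb{P}^{m}(\mathbb{Q})$ is Zariski-dense in $\mathbb{P}^{m}$, $V(\mathbb{Q})$ is non-empty, and pulling back via $\phi^{-1}$ produces a $\mathbb{Q}$-point in $U \subseteq Q^{\mathrm{sm}}$.

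For the converse, suppose that $P \in Q(\mathbb{Q})$ is a non-singular point. Choose a $\mathbb{Q}$-hyperplane $H \subset \mathbb{P}^{n}$ with $P \notin H$, and define $\pi_{P} : Q \dashrightarrow H \cong \mathbb{P}^{n-1}$ by sending a point $S \in Q \setminus \{P\}$ to the intersection of the line $\overline{PS}$ with $H$. Since $Q$ is cut out by a quadratic form, Bezout forces a generic line through $P$ to meet $Q$ in exactly two points, one of which is $P$; the other cuts out a rational inverse $H \ni R \mapsto (\overline{PR} \cap Q) \setminus \{P\}$. Writing $Q$ in coordinates adapted to $P$ and $H$, both $\pi_{P}$ and its inverse are given by explicit $\mathbb{Q}$-polynomial formulas, witnessing a birational equivalence over $\mathbb{Q}$ between $Q$ and $\mathbb{P}^{n-1}$. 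Thus $Q$ is $\mathbb{Q}$-rational.

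The subtle point, and what I expect to be the main obstacle, is ensuring that the stereographic projection actually has degree one when $Q$ is singular. If $Q$ is $\mathbb{Q}$-irreducible and non-singular, everything is classical. If $Q$ is a cone with vertex $V$ over a lower-dimensional smooth quadric $Q_{0}$, then the smooth locus of $Q$ is $Q \setminus V$, and the lines through $P$ contained in $Q$ form the trace of the tangent hyperplane $T_{P}Q \cap Q$, which is a proper closed subset; hence $\pi_{P}$ is still birational over $\mathbb{Q}$. Finally, if $Q$ is $\mathbb{Q}$-irreducible but geometrically reducible (two Galois-conjugate hyperplanes intersecting along a $\mathbb{Q}$-linear subspace), then every $\mathbb{Q}$-point of $Q$ lies in the singular locus, so the hypothesis of a smooth $\mathbb{Q}$-point is vacuous and the equivalence holds trivially. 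The case of $\mathbb{Q}$-reducible $Q$ is excluded by interpreting ``$\mathbb{Q}$-rational'' in the irreducible sense, which is all that is needed for the application in \cref{dense_equal} since reducible $Q$ arise only on a measure-zero subset of parameter space.
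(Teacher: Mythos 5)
Your proof is correct and follows essentially the same route as the paper: the forward direction by pulling back a rational point through the birational map, and the converse by stereographic projection from the smooth $\Q$-point (the paper phrases this as parametrizing lines through the origin in an affine chart, which is the same construction). Your extra case analysis for cones and geometrically reducible quadrics is a sensible supplement, but it does not change the argument.
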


\begin{proof}
	The only if part is trivial.
	We prove the if part.
	Indeed, the given non-singular $\Q$-rational point has an open neighborhood
	isomorphic to an affine quadratic hypersurface $Q' \subset \A^{n}$ passing through the origin $O = (0, ..., 0)$.
	Then,
	we can take a Zariski dense subset $U$ of $\A^{n}$ so that for every $\Q$-rational point $A$ on $U$ the line $OA$ intersects with $Q'\setminus O$ exactly once. 
	This induces a birational (i.e., generically 1 : 1 and dominant) map
	of $\mathbb{P}^{n-1}$ to $Q'$,
	hence $Q$ itself is $\Q$-rational.
\end{proof}

In a similar manner, we can prove a $v$-adic version of \cref{dense_equal}. 
More precisely,
if we set   
$\delta_{v}(n, 2) := \mu_{v}\left( \text{$X_{\bb{a}}^{2}$ is $\Q_{v}$-rational} \right)$, 
then we obtain $\delta_{v}(n, 2) = \rho_{v}(n, 2)$ for every place $v$.
Therefore, by combining it with \cref{dense_equal}, we obtain the product formula
\begin{align*}\label{productformula}
\delta(n, 2) = \prod_{\text{$v$ : place}}\delta_{v}(n, 2). 
\end{align*}
Moreover, 
the whole argument works also for the family of all quadratic hypersurfaces of $\PP^{n}$ for every fixed $n$ 
(cf. \cite{BCFJK}).
It is a natural question whether or not similar product formulas hold for other families of geometrically rational algebraic varieties.
However,
as far as the authors know, there is no reference answering this question even for all or diagonal cubic surfaces.

On the other hand,
if we replace ``$\Q$-rational" to ``$\Q$-unirational",
then similar product formulas hold for the families of all or diagonal cubic hypersurfaces of $\PP^{n}$ ($n \geq 3$) 
(cf. \cite[Theorem 1.2]{Kollar}, see also \cite[Remark 2.3.1]{CSS}). 
The latter argument works also for the quartic del Pezzo surfaces
defined by 
\[
\begin{cases}
x_{0}x_{1} = x_{2}x_{3}\\
\displaystyle \sum_{i = 0}^{4} a_{i}x_{i} = 0 \quad 
\end{cases}
\]
with $a_{i}\in\Q$ such that $ \prod_{i = 0}^{4}a_{i}(a_{0}a_{1} - a_{2}a_{3}) \neq 0$ (cf.\ \cite{Mitankin-Salgado} and \cite[Theorem 29.4 and Theorem 30.1]{Manin_2nd}).

\section*{Acknowledgements.}

First,
the authors greatly thank their supervisor Prof.\ Ken-ichi Bannai for his careful reading of manuscript and giving many helpful comments.
The authors express their sincere gratitude to Kazuki Yamada and Shuji Yamamoto for their valuable comments.
The authors are also grateful to Timothy Browning for his suggestion on the description of \cref{applicaiton} and the contents of \cref{the case k>3}. 
The authors owe the proof of \cref{applicaiton} to Mathematica.

\begin{bibdiv}
\begin{biblist}
\bibselect{diagonal}
\end{biblist}
\end{bibdiv}

\end{document}